\documentclass[11pt]{article}

\usepackage{multirow}%
\usepackage{amsmath,amssymb,amsfonts}%
\usepackage{amsthm}%
\usepackage{mathrsfs}%
\usepackage[title]{appendix}%
\usepackage{xcolor}%
\usepackage{textcomp}%
\usepackage{manyfoot}%
\usepackage{booktabs}%
\usepackage{algorithm}%
\usepackage{algorithmicx}%
\usepackage{algpseudocode}%
\usepackage{listings}%
\usepackage{algorithm}
\usepackage{algpseudocode}
\usepackage{enumitem}
\usepackage{hyperref}

\hypersetup{
  colorlinks=true,
  linkcolor=blue,
  citecolor=blue,
  urlcolor=blue
}

\newtheorem{theorem}{Theorem}[section]
\newtheorem{lemma}[theorem]{Lemma}
\newtheorem{proposition}[theorem]{Proposition}
\newtheorem{corollary}[theorem]{Corollary}
\newtheorem{definition}[theorem]{Definition}

\newtheorem{problem}[theorem]{Problem}

\usepackage[style=authoryear, backend=biber]{biblatex} % Load the package
\title{Constrained Nonnegative Gram Feasibility is $\exists\mathbb{R}$-Complete}
\author{Angshul Majumdar, IIIT Delhi, India}

\begin{document}

\maketitle

\begin{abstract}
    
We study the computational complexity of constrained nonnegative Gram feasibility.
Given a partially specified symmetric matrix together with affine relations among
selected entries, the problem asks whether there exists a nonnegative matrix
$H \in \mathbb{R}_+^{n\times r}$ such that $W = HH^\top$ satisfies all specified
entries and affine constraints. Such factorizations arise naturally in structured
low-rank matrix representations and geometric embedding problems.

We prove that this feasibility problem is $\exists\mathbb{R}$-complete already
for rank $r=2$. The hardness result is obtained via a polynomial-time reduction
from the arithmetic feasibility problem \textsc{ETR-AMI}. The reduction exploits
a geometric encoding of arithmetic constraints within rank-$2$ nonnegative Gram
representations: by fixing anchor directions in $\mathbb{R}_+^2$ and representing
variables through vectors of the form $(x,1)$, addition and multiplication
constraints can be realized through inner-product relations.

Combined with the semialgebraic formulation of the feasibility conditions, this
establishes $\exists\mathbb{R}$-completeness. We further show that the hardness
extends to every fixed rank $r\ge 2$. Our results place constrained symmetric
nonnegative Gram factorization among the growing family of geometric feasibility
problems that are complete for the existential theory of the reals.

Finally, we discuss limitations of the result and highlight the open problem of
determining the complexity of unconstrained symmetric nonnegative factorization
feasibility.
\end{abstract}

%%\pacs[JEL Classification]{D8, H51}

%%\pacs[MSC Classification]{35A01, 65L10, 65L12, 65L20, 65L70}

\section{Introduction}
\label{sec:intro}

Low-rank matrix factorizations play a central role in several areas of algorithms,
optimization, and matrix theory. A particularly natural class of such
representations arises from Gram factorizations.
Given vectors $h_1,\dots,h_n \in \mathbb{R}^r$, the matrix
\[
W_{ij} = \langle h_i , h_j \rangle
\]
is symmetric and positive semidefinite. When the vectors are additionally
constrained to lie in the nonnegative orthant, $h_i \in \mathbb{R}_+^r$,
the resulting representation
\[
W = HH^\top, \qquad H \in \mathbb{R}_+^{n\times r},
\]
is called a \emph{nonnegative Gram factorization}.
Such factorizations are closely related to completely positive matrices,
copositive optimization, and nonnegative matrix factorization models
\cite{berman2003cp,bomze2018copositive,cohen1993nonnegativerank,
vavasis2009complexity,lee1999nmf}.

Despite the structural simplicity of this representation, the computational
complexity of deciding whether such factorizations exist remains poorly
understood. The decision problem naturally falls into the framework of
the \emph{existential theory of the reals} ($\exists\mathbb{R}$), the
complexity class capturing the feasibility of systems of polynomial
equalities and inequalities over the real numbers
\cite{schaefer2010complexity,cardinal2013survey}.
This class has emerged as a central object in the study of continuous
feasibility problems with algebraic structure.

Over the past decades, many geometric realization problems have been shown
to be complete for $\exists\mathbb{R}$. Classical examples include
pseudoline stretchability and oriented matroid realizability,
whose hardness follows from Mn\"ev's universality theorem
\cite{mnev1988universality,richter1996oriented}.
Subsequent work established $\exists\mathbb{R}$-completeness for a wide
range of geometric and combinatorial problems such as segment intersection graphs \cite{Foerster2026Thickness, Abrahamsen2025Embeddability} and geometric embedding problems
\cite{schaefer2010complexity,cardinal2013survey, Erickson2025Canonical}.
These results demonstrate that apparently simple geometric feasibility
questions can already encode arbitrary semialgebraic arithmetic structure.

Matrix factorization problems exhibit similarly rich computational
behavior. Nonnegative matrix factorization is known to be NP-hard
in general \cite{vavasis2009complexity}. Closely related notions such as
nonnegative rank and completely positive factorizations have been
studied extensively in matrix theory and optimization
\cite{cohen1993nonnegativerank,berman2003cp,bomze2018copositive}.
More recently, connections between matrix factorizations and convex
geometry have also been explored through notions such as PSD rank
and extended formulations \cite{fawzi2015psdrank}.
However, the precise complexity of low-rank nonnegative Gram realizations
from the viewpoint of existential-real complexity has received
comparatively little attention.

In this paper we study a constrained version of the nonnegative Gram
realization problem in low dimension.
The input consists of a partially specified symmetric matrix together
with affine constraints on selected entries.
The question is whether there exists a matrix
$H \in \mathbb{R}_+^{n\times 2}$ such that
\[
W = HH^\top
\]
on all specified entries and all affine side constraints are satisfied.
We refer to this problem as
\emph{constrained rank-2 nonnegative Gram feasibility}.

Our main result shows that this problem already captures the expressive
power of existential-real arithmetic.

\begin{theorem}
\label{thm:intro-main}
Constrained rank-2 nonnegative Gram feasibility is
$\exists\mathbb{R}$-complete.
\end{theorem}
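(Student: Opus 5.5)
The proof splits into membership in $\exists\mathbb{R}$ and $\exists\mathbb{R}$-hardness.

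\emph{Membership.} Introduce real variables $h_{i1},h_{i2}$ for $i=1,\dots,n$. The conditions are:
\begin{itemize}
\item $h_{ik}\ge 0$ for every $i$ and $k$;
\item $h_{i1}h_{j1}+h_{i2}h_{j2}=W_{ij}$ for every specified entry $(i,j)$;
\item each affine side constraint $\sum c_{ij} W_{ij} = d$, with every $W_{ij}$ replaced by the corresponding quadratic polynomial.
\end{itemize}
This is an existential formula of size polynomial in the input, provided rational input data are encoded in binary. Such data can also be expressed with polynomially many auxiliary variables through repeated squaring if one insists on small constants. Hence the problem lies in $\exists\mathbb{R}$.

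\emph{Hardness.} I reduce from \textsc{ETR-AMI}. An instance consists of variables $x_1,\dots,x_m$ and constraints of the forms $x_i=1$, $x_i+x_j=x_k$ and $x_i x_j = x_k$. Deciding feasibility is $\exists\mathbb{R}$-complete even when the variables are restricted to a bounded interval such as $[1/2,2]$, or at least to positive reals. I will use that restricted version, so that no sign issues arise in $\mathbb{R}_+^2$.

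\emph{Anchors.} Introduce anchor rows $a$ and $b$ with $W_{aa}=1$, $W_{bb}=1$, $W_{ab}=0$. In $\mathbb{R}_+^2$ these entries force $\{a,b\}=\{e_1,e_2\}$. By the symmetry swapping coordinates, I may assume $a=e_1$ and $b=e_2$.

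\emph{Variable rows.} For each variable $x_i$ add a row $h_i$ with $W_{i b}=1$. This forces $h_i=(x_i,1)$ with $x_i=W_{ia}\ge 0$, so the rows realize exactly the nonnegative reals.

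\emph{Gadgets.} Each arithmetic constraint becomes a linear constraint on specified or free entries of $W$:
\begin{itemize}
\item $x_i=1$ becomes $W_{ia}=1$;
\item addition $x_i+x_j=x_k$ becomes the affine relation $W_{ia}+W_{ja}-W_{ka}=0$;
\item multiplication uses $W_{ij}=\langle (x_i,1),(x_j,1)\rangle = x_ix_j+1$, so $x_ix_j=x_k$ becomes the affine relation $W_{ij}-W_{ka}=1$.
\end{itemize}
Entries not mentioned are left unspecified, which the partial-matrix formulation permits.

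\emph{Correctness.} In the forward direction, a solution $x$ yields $H$ with rows $e_1$, $e_2$ and $(x_i,1)$. In the backward direction, after the coordinate normalization, reading off $x_i=W_{ia}$ gives a solution. The construction is linear in size.

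\emph{Main obstacle.} The delicate point is making the anchor normalization and the domain restriction airtight. First, I must argue that $W_{aa}=W_{bb}=1$ together with $W_{ab}=0$ and nonnegativity really pins $a,b$ to the standard basis up to a swap. This holds because orthogonal nonnegative vectors have disjoint supports, and unit norm then fixes each vector as a standard basis vector. Second, I must invoke a version of \textsc{ETR-AMI} whose variables are nonnegative. If only the general version is available, I would first reduce it to a nonnegative one, either by writing $x=x^+-x^-$, which is compatible with addition and multiplication via expanded products and auxiliary variables, or by citing the known bounded and positive variants of \textsc{ETR-AMI}.

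For the extension to fixed rank $r>2$, I would add anchors $e_3,\dots,e_r$ with pairwise inner products $0$ and unit norms. I would also require $W_{i,e_k}=0$ for $k\ge3$, which forces the extra coordinates of every variable row to vanish.
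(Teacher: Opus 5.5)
Your construction is the paper's. Membership comes from substituting $W_{ij}=h_{i1}h_{j1}+h_{i2}h_{j2}$ into the specified entries and side constraints. Hardness uses two orthonormal anchors, variable rows pinned to $(x_i,1)$ by $W_{ib}=1$, and the same three affine gadgets $W_{ia}=1$, $W_{ia}+W_{ja}-W_{ka}=0$, $W_{ij}-W_{ka}=1$.

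The step that fails as written is your choice of source problem. The rows $(x_i,1)$ range over all $x_i\ge 0$, and nothing else constrains them. So your target instance is feasible exactly when the arithmetic system has a solution in $\mathbb{R}_{\ge 0}^m$. The reduction is therefore correct only from the variant whose domain is precisely $\mathbb{R}_{\ge 0}$, which is how the paper defines \textsc{ETR-AMI}.

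You announce that you will reduce from the $[1/2,2]$-bounded or the strictly positive variant. In that case the backward direction breaks. The system $x_1=1$, $x_2=1$, $x_1+x_3=x_2$ has no solution in $[1/2,2]$ and no positive solution. Yet your target instance is feasible with $h_3=(0,1)$. Restricting the source domain does not protect you against sign issues. The danger is the other inclusion: the target realizes values the source forbids.

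You can repair this in either of two ways.
\begin{itemize}
\item Reduce from the nonnegative variant. Your splitting $x=x^+-x^-$ produces it from the unrestricted one.
\item Keep the restricted variant and enforce its domain with extra variable rows. For positivity, add $o=1$ and $x_iy_i=o$. For $x_i\in[1/2,2]$, add $o=h+h$, $x_i=h+s_i$, $d=o+o$ and $d=x_i+t_i$.
\end{itemize}
With that change your proof coincides with the paper's.
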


The reduction is from the arithmetic normal form problem \textsc{ETR-AMI}.
The key observation is that rank-2 nonnegative Gram geometry admits a
simple arithmetic encoding. Two anchor rows force the canonical
directions $(1,0)$ and $(0,1)$ in $\mathbb{R}_+^2$.
Variable rows are then constrained to lie on the affine line $(x,1)$,
so that selected Gram entries recover variables linearly,
while pairwise inner products generate multiplicative relations via
\[
\langle (x,1),(y,1)\rangle = xy + 1 .
\]
Using this mechanism, addition, multiplication, and constant constraints
among real variables can be encoded by affine relations among entries
of the Gram matrix.

Our result shows that even in the smallest nontrivial dimension,
constrained nonnegative Gram realizations already possess the expressive
power of existential-real arithmetic. This places the problem within the
$\exists\mathbb{R}$ complexity landscape and highlights the algebraic
richness of low-rank nonnegative Gram geometry.
We also show that the hardness propagates to every fixed rank $r\ge 2$.

\paragraph{Organization of the paper.}
Section~\ref{sec:background} introduces the formal framework.
Section~\ref{sec:geometry} develops the rank-2 nonnegative Gram geometry.
Section~\ref{sec:reduction} presents the reduction from \textsc{ETR-AMI}.
Section~\ref{sec:consequences} derives the resulting complexity classification.
Section~\ref{sec:open} states several open problems, and
Section~\ref{sec:conclusion} concludes.

\section{Background and Preliminaries}
\label{sec:background}

This section fixes notation and recalls the formal complexity framework used
throughout the paper. We first review the class $\exists\mathbb{R}$, then
state the arithmetic normal form used in the reduction, and finally define
the constrained nonnegative Gram feasibility problem studied here.

\subsection{The class \texorpdfstring{$\exists\mathbb{R}$}{ER}}

The class $\exists\mathbb{R}$ consists of all decision problems that are
polynomial-time reducible to the existential theory of the reals. A standard
complete problem is the following: given multivariate polynomials
$p_1,\dots,p_s,q_1,\dots,q_t$ with integer coefficients, decide whether the
sentence
\[
\exists x \in \mathbb{R}^m \;:\;
\bigwedge_{i=1}^{s} p_i(x)=0
\;\wedge\;
\bigwedge_{j=1}^{t} q_j(x)\ge 0
\]
is true \cite{schaefer2010complexity,cardinal2013survey}. Equivalently,
$\exists\mathbb{R}$ captures feasibility questions for semialgebraic sets
specified by polynomial equations and inequalities.

The class $\exists\mathbb{R}$ arises naturally in geometric and algebraic
realizability problems. Classical examples include pseudoline stretchability
and oriented matroid realizability, whose hardness is rooted in Mn\"ev's
universality theorem \cite{mnev1988universality,richter1996oriented}.
A large number of computational geometry problems are now known to be
$\exists\mathbb{R}$-complete, including several geometric embedding and
intersection-graph realizability problems
\cite{schaefer2010complexity,cardinal2013survey}. Thus
$\exists\mathbb{R}$-hardness is a standard notion of intractability for
continuous feasibility problems with algebraic structure. Recent results further clarify the power of $\exists\mathbb{R}$ through
tensor-rank problems~\cite{Schaefer2018Tensor}, fixed-point
and Nash-equilibrium questions~\cite{Schaefer2017Fixed}, and
extensions of the class itself~\cite{Schaefer2024Beyond}.

It is known that
\[
\mathrm{NP} \subseteq \exists\mathbb{R} \subseteq \mathrm{PSPACE},
\]
and both inclusions are widely believed to be strict
\cite{schaefer2010complexity}. For this reason, $\exists\mathbb{R}$ has become
the natural complexity class for exact feasibility questions involving real
variables and polynomial constraints.

\subsection{Arithmetic normal form}

To prove hardness we reduce from a restricted arithmetic normal form for
$\exists\mathbb{R}$.

\begin{definition}[ETR-AMI]
\label{def:etrami}
An instance of \textsc{ETR-AMI} consists of variables
\[
x_1,\dots,x_n \in \mathbb{R}_{\ge 0}
\]
together with constraints of the following three forms:
\[
x_i = 1,\qquad
x_i = x_j + x_k,\qquad
x_i = x_j x_k.
\]
The decision problem asks whether there exists a nonnegative real assignment
satisfying all constraints.
\end{definition}

Standard normal-form results for the existential theory of the reals imply
that systems of this type can be used in place of more general existential
formulas in $\exists\mathbb{R}$-hardness reductions
\cite{schaefer2010complexity,cardinal2013survey}. Such restricted arithmetic
systems are convenient because they isolate the basic algebraic operations
needed to encode semialgebraic feasibility. In particular, once constants,
addition, and multiplication can be simulated, one obtains the expressive
power required for general existential-real arithmetic
\cite{schaefer2010complexity,cardinal2013survey}.

\subsection{Nonnegative Gram factorizations}

Let $W \in \mathbb{R}^{n\times n}$ be symmetric. A rank-$r$ Gram factorization
of $W$ is a representation
\[
W = HH^\top
\]
for some matrix $H \in \mathbb{R}^{n\times r}$. If the rows of $H$ are denoted
by $h_1,\dots,h_n \in \mathbb{R}^r$, this is equivalent to the identities
\[
W_{ij} = \langle h_i,h_j\rangle
\qquad \text{for all } i,j.
\]

If in addition each row vector lies in the nonnegative orthant,
\[
h_i \in \mathbb{R}_+^r,
\]
then $W = HH^\top$ is called a \emph{nonnegative Gram factorization}. Such
factorizations are closely related to completely positive matrices, copositive
optimization, and nonnegative matrix factorization
\cite{berman2003cp,bomze2018copositive,cohen1993nonnegativerank,
laurent2015cp,vavasis2009complexity}. More broadly, low-rank matrix factorizations
and their complexity have been studied in several neighboring settings,
including exact and approximate nonnegative factorization and positive
semidefinite rank \cite{vavasis2009complexity,arora2012nmf,fawzi2015psdrank}.

\subsection{Constrained nonnegative Gram feasibility}

We now define the decision problem studied in this paper.

\begin{definition}[Constrained rank-$r$ nonnegative Gram feasibility]
\label{def:gramfeas}
An instance consists of the following data:
\begin{enumerate}[label=(\roman*)]
\item a finite index set of specified pairs $(i,j)$ together with prescribed
rational values $\widehat{W}_{ij}$ for those entries of a symmetric matrix
$W \in \mathbb{R}^{n\times n}$;
\item a finite family of affine constraints over selected entries of $W$;
\item a target rank parameter $r$.
\end{enumerate}
The question is whether there exists a matrix
\[
H \in \mathbb{R}_+^{n\times r}
\]
such that
\[
(HH^\top)_{ij} = \widehat{W}_{ij}
\qquad \text{for all specified pairs } (i,j),
\]
and all affine side constraints are satisfied after substituting
\[
W_{ij} = (HH^\top)_{ij}.
\]
\end{definition}

Thus the only existentially quantified variables are the entries of $H$.
The matrix $W$ itself serves only as a convenient notation for specified Gram
entries and for affine relations among them.

In this paper we focus on the first nontrivial case $r=2$.

\subsection{Membership in \texorpdfstring{$\exists\mathbb{R}$}{ER}}

The preceding problem lies in $\exists\mathbb{R}$.

\begin{proposition}
\label{prop:er-membership}
Constrained rank-$r$ nonnegative Gram feasibility belongs to
$\exists\mathbb{R}$.
\end{proposition}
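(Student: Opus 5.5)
The plan is to translate an instance of Definition~\ref{def:gramfeas} directly into an existential sentence over the reals of the standard form recalled above, and to check that this translation runs in polynomial time. Given an instance with $n$ rows, specified pairs $S$ with rational values $\widehat{W}_{ij}$, a family of affine constraints, and rank parameter $r$, I introduce $nr$ real variables $h_{ik}$, $1\le i\le n$, $1\le k\le r$, standing for the entries of $H$. For any pair $(i,j)$ the Gram entry is the quadratic polynomial
\[
w_{ij}(h) \;=\; \sum_{k=1}^{r} h_{ik}h_{jk},
\]
which has integer coefficients and $r$ monomials.

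The constraints are written as follows. Nonnegativity becomes the $nr$ inequalities $h_{ik}\ge 0$. For each specified pair $(i,j)\in S$ with $\widehat{W}_{ij}=p/q$ in lowest terms and $q>0$, I write the equation $q\,w_{ij}(h)-p=0$. Multiplying through by the denominator keeps integer coefficients and increases the bit-size only polynomially. Each affine side constraint has the form $\sum_{(i,j)\in T}\alpha_{ij}W_{ij}+\beta \;\bowtie\; 0$ with $\bowtie\in\{=,\ge\}$, after moving all terms to one side. I substitute $w_{ij}(h)$ for $W_{ij}$ and clear the common denominator of the rational coefficients, which gives a polynomial equation or inequality of degree $2$ with integer coefficients. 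If the paper's notion of affine constraint also allows strict inequalities or $\le$, then $\le$ is handled by negation. A strict inequality $g>0$ is handled either by the fact that $\exists\mathbb{R}$ admits strict inequalities, or by introducing a fresh variable $z$ with $g\cdot z^2-1=0$.

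With these constraints, the resulting sentence $\exists h\in\mathbb{R}^{nr}\colon\bigwedge(\cdots)$ is true exactly when a nonnegative $H$ exists satisfying all the requirements. This holds because every condition in the definition has been transcribed literally, and $W$ is only notation for the polynomials $w_{ij}$.

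It remains to bound the size of the output, and this is the main point needing care. Here $r$ is part of the input, and I assume it is given in unary or is bounded by the size of the input. This is harmless in the paper's setting, where $r$ is fixed, or $r=2$ for the main theorem. Under this assumption the number of variables is $nr$. The total number of monomials is at most $r$ times the number of specified or constrained entries, plus $nr$. All coefficients are integers whose bit-lengths are polynomial in the input encoding. Hence the formula is built in polynomial time, which gives a polynomial-time many-one reduction to ETR and therefore membership in $\exists\mathbb{R}$.

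Beyond the unary-$r$ point, I expect no real obstacle: the only subtleties are clearing rational denominators and allowing the inequality types of the side constraints, both of which are routine.
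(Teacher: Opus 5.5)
Your proposal is correct and is essentially the paper's proof. Take the $nr$ entries $h_{ik}$ as the only variables, impose $h_{ik}\ge 0$, and replace each $W_{ij}$ in the specified-entry and affine constraints by the quadratic $\sum_{k=1}^{r} h_{ik}h_{jk}$, which gives a conjunction of polynomial equations and inequalities. Your extra care about clearing denominators, other inequality types, and the encoding of $r$ goes beyond what the paper writes; the unary-$r$ assumption can even be dropped by capping $r$ at $\binom{n+1}{2}$, since by Carath\'eodory every completely positive $n\times n$ matrix has a nonnegative factorization with at most that many columns.
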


\begin{proof}
Let the entries of the unknown matrix $H$ be denoted by
\[
h_{ik}, \qquad 1\le i\le n,\; 1\le k\le r.
\]
These are the only quantified real variables.

For every specified pair $(i,j)$ with prescribed rational value
$\widehat{W}_{ij}$, the Gram requirement becomes the polynomial equation
\[
\sum_{k=1}^{r} h_{ik}h_{jk} = \widehat{W}_{ij}.
\]
Nonnegativity of the realization is expressed by the polynomial inequalities
\[
h_{ik} \ge 0
\qquad \text{for all } i,k.
\]

Finally, each affine side constraint on selected entries of $W$ has the form
\[
\sum_{\ell=1}^{m} a_\ell W_{i_\ell j_\ell} = b
\]
with rational coefficients $a_\ell,b$. After substituting
\[
W_{ij}=\sum_{k=1}^{r} h_{ik}h_{jk},
\]
this becomes a polynomial equation in the variables $h_{ik}$.

Hence the entire feasibility problem can be written as a conjunction of
polynomial equalities and inequalities over the reals. Therefore it defines
an instance of the existential theory of the reals and belongs to
$\exists\mathbb{R}$ \cite{schaefer2010complexity,cardinal2013survey}.
\end{proof}

\section{Rank-2 Nonnegative Gram Geometry}
\label{sec:geometry}

In this section we isolate the geometric mechanism underlying the reduction.
We work with rank-2 nonnegative Gram realizations and show that, once two
distinguished anchor rows are fixed, the remaining rows can be used to encode
nonnegative real variables and their arithmetic relations.

Symmetric matrices admitting factorizations of the form $W=HH^\top$ with
$H\ge 0$ are completely positive in the sense of matrix theory
\cite{berman2003cp,laurent2015cp}. Thus the present construction may be
viewed as a rank-2 completely positive realization with additional affine
constraints. The point of this section is that, already in rank $2$, the
nonnegative orthant has enough structure to support a simple arithmetic
encoding \cite{berman2003cp,bomze2018copositive,laurent2015cp}.

\subsection{Rank-2 nonnegative Gram realizations}

Let $W$ be a symmetric matrix admitting a rank-2 nonnegative Gram
factorization. Thus there exists
\[
H \in \mathbb{R}_+^{n\times 2}
\]
such that
\[
W = HH^\top.
\]
If the rows of $H$ are denoted by $h_1,\dots,h_n \in \mathbb{R}_+^2$, then
\[
W_{ij} = \langle h_i,h_j\rangle
\qquad \text{for all } i,j.
\]
Throughout this section, all realizations are assumed to be of this form.

\begin{definition}[Anchor rows]
\label{def:anchors}
Two distinguished indices $e$ and $f$ are called \emph{anchors} if the
corresponding Gram entries satisfy
\[
W_{ee}=1,\qquad W_{ff}=1,\qquad W_{ef}=0.
\]
\end{definition}

The anchor constraints force the corresponding vectors to coincide with the
two coordinate directions in $\mathbb{R}_+^2$.

\begin{lemma}[Anchor rigidity]
\label{lem:anchor-rigidity}
Suppose $W$ admits a rank-2 nonnegative Gram realization satisfying the anchor
constraints of Definition~\ref{def:anchors}. Then, up to permutation of the
two coordinates,
\[
h_e=(1,0),\qquad h_f=(0,1).
\]
\end{lemma}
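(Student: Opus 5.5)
Write $h_e=(a_1,a_2)$ and $h_f=(b_1,b_2)$ with all four coordinates nonnegative. The plan is to turn the three anchor constraints of Definition~\ref{def:anchors} into coordinate equations and solve them directly. The constraints read
\[
a_1^2+a_2^2=1,\qquad b_1^2+b_2^2=1,\qquad a_1b_1+a_2b_2=0 .
\]
The key step is orthogonality. Both products $a_1b_1$ and $a_2b_2$ are nonnegative, because all coordinates are. Their sum is zero, so each product vanishes:
\[
a_1b_1=0,\qquad a_2b_2=0 .
\]
This is exactly where nonnegativity is used. Without it, $h_e$ and $h_f$ could be any orthonormal pair, obtained by rotating or reflecting the standard basis.

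Next comes a case split on the support of $h_e$. Since $a_1^2+a_2^2=1$, at least one coordinate of $h_e$ is nonzero.

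Suppose $a_1>0$. Then $a_1b_1=0$ gives $b_1=0$. The normalization of $h_f$ then gives $b_2=1$, using $b_2\ge 0$. Now $a_2b_2=0$ forces $a_2=0$, and hence $a_1=1$. This yields $h_e=(1,0)$ and $h_f=(0,1)$.

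Suppose instead $a_1=0$. Then $a_2=1$, and $a_2b_2=0$ gives $b_2=0$, hence $b_1=1$. This yields $h_e=(0,1)$ and $h_f=(1,0)$, which is the first configuration with the two coordinates swapped.

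Together the two cases give the claim up to permutation of coordinates. Swapping the columns of $H$ preserves both nonnegativity and $HH^\top$, so it is harmless to normalize to the first case.

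I do not expect a genuine obstacle. The only point needing care is the deduction that a vanishing sum of nonnegative terms forces each term to vanish, together with taking nonnegative square roots when solving $b_2^2=1$ and $a_1^2=1$. Both follow from $H\in\mathbb{R}_+^{n\times 2}$.
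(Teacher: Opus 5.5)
Your proof is correct and follows the same argument as the paper. Nonnegativity turns $a_1b_1+a_2b_2=0$ into $a_1b_1=a_2b_2=0$, so the supports are disjoint, and the unit norms then force the standard basis vectors; your case split on $a_1$ just spells out that last step, which the paper states in one line.
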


\begin{proof}
Write
\[
h_e=(a,b),\qquad h_f=(c,d)
\]
with $a,b,c,d\ge 0$. The Gram constraints imply
\[
a^2+b^2 = 1,\qquad c^2+d^2 = 1,\qquad ac+bd = 0.
\]
Since all quantities are nonnegative, the equality $ac+bd=0$ implies
\[
ac=0,\qquad bd=0.
\]
Hence the supports of $h_e$ and $h_f$ are disjoint. Because each vector has
unit norm and both lie in $\mathbb{R}_+^2$, it follows that one of them equals
$(1,0)$ and the other equals $(0,1)$. This is exactly the claim, up to
swapping the two coordinates.
\end{proof}

Thus the anchor rows canonically determine a coordinate system in the
nonnegative orthant.

\subsection{Variable rows}

We next introduce rows that encode scalar variables.

\begin{definition}[Variable rows]
\label{def:variable-rows}
For each arithmetic variable we introduce an index $u$ and impose the
constraint
\[
W_{fu}=1.
\]
Such an index will be called a \emph{variable row}.
\end{definition}

The effect of this constraint is immediate once the anchors are fixed.

\begin{lemma}[Variable encoding]
\label{lem:variable-encoding}
Assume the anchor normalization
\[
h_e=(1,0),\qquad h_f=(0,1).
\]
Let $u$ be a variable row satisfying $W_{fu}=1$. Then
\[
h_u=(x_u,1)
\]
for a uniquely determined scalar $x_u\in \mathbb{R}_{\ge 0}$.
\end{lemma}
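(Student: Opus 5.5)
The plan is a direct coordinate computation using the anchor normalization. Write $h_u=(a,b)$ with $a,b\ge 0$, which is possible because every row of $H$ lies in $\mathbb{R}_+^2$. Lemma~\ref{lem:anchor-rigidity} lets us assume $h_f=(0,1)$, possibly after swapping the two coordinates; the statement already takes this normalization as a hypothesis. The Gram identity then gives
\[
W_{fu}=\langle h_f,h_u\rangle = 0\cdot a + 1\cdot b = b .
\]
So the constraint $W_{fu}=1$ forces $b=1$, and hence $h_u=(a,1)$.

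Next, set $x_u:=a$. Nonnegativity of $x_u$ is inherited from the nonnegativity of $H$. Uniqueness holds because $x_u$ is the first coordinate of the fixed vector $h_u$. It can also be read off intrinsically from the Gram matrix as $x_u=\langle h_e,h_u\rangle=W_{eu}$, using $h_e=(1,0)$. This identity is worth recording here, since the later reduction will use $W_{eu}$ to access the variable linearly.

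The only point needing care is the phrase ``uniquely determined''. The realization $H$ itself is not unique: the coordinate swap allowed by Lemma~\ref{lem:anchor-rigidity} is one ambiguity. Once the normalization $h_e=(1,0)$, $h_f=(0,1)$ is fixed, however, $x_u$ is a function of the Gram entry $W_{eu}$ alone. So it does not depend on any further choice beyond the anchor normalization. I expect no real obstacle; the argument is a one-line inner product computation, with only this bookkeeping about normalization to state carefully.
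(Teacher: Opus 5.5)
Your proof is correct and matches the paper's argument essentially verbatim: write $h_u=(a,b)$ with $a,b\ge 0$, use $h_f=(0,1)$ to get $W_{fu}=b=1$, and set $x_u:=a$. Your extra remark that $x_u=W_{eu}$ is exactly the paper's next result, Lemma~\ref{lem:coordinate-extraction}, so it anticipates that lemma rather than changing the route.
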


\begin{proof}
Write
\[
h_u=(a,b)
\]
with $a,b\ge 0$. Since $h_f=(0,1)$, the constraint $W_{fu}=1$ gives
\[
1 = \langle h_f,h_u\rangle = b.
\]
Hence $b=1$. Setting $x_u:=a$, we obtain
\[
h_u=(x_u,1),
\]
and uniqueness is immediate.
\end{proof}

Therefore every variable row lies on the affine line
\[
\{(x,1):x\in\mathbb{R}_{\ge 0}\}\subset \mathbb{R}_+^2.
\]
This line will serve as the coordinate space for the arithmetic variables in
the reduction.

\subsection{Extraction of coordinates}

Once a variable row has the form $(x_u,1)$, its first coordinate is directly
recoverable from the Gram matrix.

\begin{lemma}[Coordinate extraction]
\label{lem:coordinate-extraction}
Under the assumptions of Lemma~\ref{lem:variable-encoding},
\[
W_{eu}=x_u.
\]
\end{lemma}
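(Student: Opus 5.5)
The proof is a direct computation from the definition of the Gram entries, using Lemma~\ref{lem:variable-encoding}. Under the anchor normalization we have $h_e=(1,0)$. Since $u$ is a variable row, that lemma gives $h_u=(x_u,1)$ with $x_u\ge 0$ uniquely determined.

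The plan is to substitute these two vectors into the identity $W_{eu}=\langle h_e,h_u\rangle$, which holds for every pair of indices in a rank-$2$ Gram realization. This yields
\[
W_{eu}=\langle (1,0),(x_u,1)\rangle = 1\cdot x_u + 0\cdot 1 = x_u ,
\]
which is exactly the claim.

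There is no real obstacle; the only point needing care is the normalization. Lemma~\ref{lem:anchor-rigidity} fixes the anchors only up to swapping the two coordinates. However, the assumptions of Lemma~\ref{lem:variable-encoding} already fix the labeling $h_e=(1,0)$, $h_f=(0,1)$, so no case split is needed.

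I would also add a remark that the conclusion is invariant under that swap, provided one consistently defines $x_u$ as the coordinate of $h_u$ aligned with $h_e$. Symmetry of $W$ then gives $W_{ue}=x_u$ as well.
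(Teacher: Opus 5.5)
Your proof is correct and is the same as the paper's: substitute $h_e=(1,0)$ and $h_u=(x_u,1)$ into $W_{eu}=\langle h_e,h_u\rangle$ to obtain $x_u$. Your remarks on the coordinate swap and on $W_{ue}=x_u$ are accurate but not needed, since the hypotheses of Lemma~\ref{lem:variable-encoding} already fix the normalization.
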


\begin{proof}
Since $h_e=(1,0)$ and $h_u=(x_u,1)$,
\[
W_{eu}=\langle h_e,h_u\rangle
      =\langle (1,0),(x_u,1)\rangle
      =x_u.
\]
\end{proof}

Thus specified Gram entries against the anchor row $e$ can be used to read off
the encoded variables linearly.

\subsection{Multiplicative structure}

The crucial feature of the construction is that inner products between
variable rows automatically generate multiplication.

\begin{lemma}[Multiplication identity]
\label{lem:multiplication-identity}
Let $u$ and $v$ be variable rows encoding scalars $x_u,x_v\in\mathbb{R}_{\ge 0}$.
Then
\[
W_{uv} = x_u x_v + 1.
\]
\end{lemma}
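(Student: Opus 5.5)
The identity follows directly from the variable encoding, so the plan is simply to make the setting explicit and then compute. We work under the standing hypotheses of this subsection: the anchor rows $e,f$ satisfy Definition~\ref{def:anchors}. By Lemma~\ref{lem:anchor-rigidity}, after possibly swapping the two coordinates (a relabeling that changes no Gram entry), we may assume $h_e=(1,0)$ and $h_f=(0,1)$. The rows $u$ and $v$ are variable rows in the sense of Definition~\ref{def:variable-rows}, so $W_{fu}=W_{fv}=1$.

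First, apply Lemma~\ref{lem:variable-encoding} to both rows. This gives $h_u=(x_u,1)$ and $h_v=(x_v,1)$, where $x_u,x_v\ge 0$ are the encoded scalars. By Lemma~\ref{lem:coordinate-extraction}, these are the same scalars read off as $W_{eu}$ and $W_{ev}$, so "encoding scalars $x_u,x_v$" has an unambiguous meaning.

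Second, compute the Gram entry directly:
\[
W_{uv}=\langle h_u,h_v\rangle=\langle (x_u,1),(x_v,1)\rangle = x_ux_v+1 .
\]
If $u=v$, the same computation gives $W_{uu}=x_u^2+1$. This is consistent with the statement, since the statement places no requirement that $u\neq v$.

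There is no genuine obstacle here. The only point needing care is the coordinate permutation allowed in Lemma~\ref{lem:anchor-rigidity}. Gram entries are invariant under a simultaneous permutation of the coordinates of all rows, so fixing the normalization $h_e=(1,0)$, $h_f=(0,1)$ loses no generality. Once this normalization is fixed, the variable-row constraint $W_{fu}=1$ forces the second coordinate of $h_u$ to be $1$, and the identity follows.
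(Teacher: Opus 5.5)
Your proposal is correct and follows essentially the same approach as the paper: invoke Lemma~\ref{lem:variable-encoding} to write $h_u=(x_u,1)$ and $h_v=(x_v,1)$, then compute the inner product directly. Your remarks on the coordinate-permutation normalization and the case $u=v$ are sound additions to that argument.
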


\begin{proof}
By Lemma~\ref{lem:variable-encoding},
\[
h_u=(x_u,1),\qquad h_v=(x_v,1).
\]
Therefore
\[
W_{uv}
= \langle h_u,h_v\rangle
= \langle (x_u,1),(x_v,1)\rangle
= x_u x_v + 1.
\]
\end{proof}

This identity is the basic source of arithmetic expressivity in the reduction.

\subsection{Arithmetic realization}

Combining the preceding lemmas yields a direct translation from arithmetic
constraints to affine relations among Gram entries.

\begin{proposition}[Arithmetic realization in rank $2$]
\label{prop:arithmetic-realization}
Assume a rank-2 nonnegative Gram realization with anchors
\[
h_e=(1,0),\qquad h_f=(0,1),
\]
and let $u,v,w$ be variable rows encoding $x_u,x_v,x_w\in\mathbb{R}_{\ge 0}$.
Then the following equivalences hold:
\begin{align*}
x_u = 1
&\iff W_{eu}=1,\\[0.5ex]
x_u + x_v = x_w
&\iff W_{eu}+W_{ev}-W_{ew}=0,\\[0.5ex]
x_u x_v = x_w
&\iff W_{uv}-W_{ew}-1=0.
\end{align*}
\end{proposition}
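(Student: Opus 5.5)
The plan is to reduce all three equivalences to identities among the encoded scalars, using the coordinate description of the rows. By hypothesis the anchors are normalized as $h_e=(1,0)$ and $h_f=(0,1)$, and $u,v,w$ are variable rows. Lemma~\ref{lem:variable-encoding} then gives
\[
h_u=(x_u,1),\qquad h_v=(x_v,1),\qquad h_w=(x_w,1)
\]
with uniquely determined nonnegative scalars. From here, Lemma~\ref{lem:coordinate-extraction} gives
\[
W_{eu}=x_u,\qquad W_{ev}=x_v,\qquad W_{ew}=x_w,
\]
and Lemma~\ref{lem:multiplication-identity} gives $W_{uv}=x_ux_v+1$. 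Each equivalence will follow by substituting these expressions into the right-hand side and observing that the resulting scalar equation coincides with the left-hand side. Both directions of each ``$\iff$'' are therefore handled at once, since we are rewriting one equation into an equivalent one.

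The three cases, in order, are as follows.
\begin{enumerate}[label=(\roman*)]
\item \emph{Constant.} The condition $W_{eu}=1$ reads $x_u=1$ after substituting $W_{eu}=x_u$.
\item \emph{Addition.} The condition $W_{eu}+W_{ev}-W_{ew}=0$ becomes $x_u+x_v-x_w=0$, which is the same as $x_u+x_v=x_w$.
\item \emph{Multiplication.} The condition $W_{uv}-W_{ew}-1=0$ becomes $(x_ux_v+1)-x_w-1=0$. The constant $+1$ coming from the second coordinates cancels exactly against the $-1$ in the affine relation, leaving $x_ux_v=x_w$.
\end{enumerate}

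There is no genuine obstacle; the only point needing care is well-definedness. The normalization $h_e=(1,0)$, $h_f=(0,1)$ is assumed in the statement; in general Lemma~\ref{lem:anchor-rigidity} only fixes the anchors up to swapping coordinates, and that swap is a relabeling that does not affect any Gram entry. The scalars $x_u,x_v,x_w$ must be the ones determined by the realization, which Lemma~\ref{lem:variable-encoding} guarantees are unique. With these two points noted, each equivalence is a one-line substitution, and I would write the proof as three short displayed computations.
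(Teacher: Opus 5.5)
Your proposal is correct and is essentially the paper's own proof. You substitute $W_{eu}=x_u$, $W_{ev}=x_v$, $W_{ew}=x_w$ from Lemma~\ref{lem:coordinate-extraction} and $W_{uv}=x_ux_v+1$ from Lemma~\ref{lem:multiplication-identity}, so that each affine relation becomes exactly the corresponding arithmetic equation, which gives both directions of each equivalence at once.
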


\begin{proof}
The first equivalence follows immediately from
Lemma~\ref{lem:coordinate-extraction}. For the second, again by
Lemma~\ref{lem:coordinate-extraction},
\[
W_{eu}+W_{ev}-W_{ew}
= x_u+x_v-x_w,
\]
so the displayed equality holds if and only if $x_u+x_v=x_w$.

For the third, Lemma~\ref{lem:multiplication-identity} gives
\[
W_{uv}=x_u x_v+1,
\]
while Lemma~\ref{lem:coordinate-extraction} gives
\[
W_{ew}=x_w.
\]
Hence
\[
W_{uv}-W_{ew}-1 = x_u x_v - x_w,
\]
and the claim follows.
\end{proof}

Proposition~\ref{prop:arithmetic-realization} shows that constants,
addition, and multiplication can all be represented by affine relations among
entries of a rank-2 nonnegative Gram matrix. This is precisely the structure
required for the reduction from \textsc{ETR-AMI} in the next section.

\section{Reduction from \textsc{ETR-AMI}}
\label{sec:reduction}

In this section we prove the main lower bound by giving a polynomial-time
reduction from the arithmetic normal form introduced in
Definition~\ref{def:etrami} to constrained rank-2 nonnegative Gram feasibility.
The reduction uses the geometric encoding developed in
Section~\ref{sec:geometry}.

Let
\[
\mathcal{I}
\]
be an instance of \textsc{ETR-AMI} consisting of variables
\[
x_1,\dots,x_n \in \mathbb{R}_{\ge 0}
\]
and a finite family of constraints of the forms
\[
x_i = 1,\qquad x_i = x_j + x_k,\qquad x_i = x_j x_k.
\]
As discussed in Section~\ref{sec:background}, arithmetic systems of this type
arise from standard normal-form transformations for the existential theory of
the reals and are routinely used as source problems in
\(\exists\mathbb{R}\)-hardness reductions
\cite{schaefer2010complexity,cardinal2013survey}.

We construct from \(\mathcal{I}\) an instance of
Definition~\ref{def:gramfeas} with target rank \(r=2\).

\subsection{Construction}
\label{subsec:construction}

The output instance consists of a set of specified Gram entries, a family of
affine side constraints, and rank parameter \(r=2\), exactly as allowed by
Definition~\ref{def:gramfeas}. The existential variables of the target
instance are the entries of a realization matrix
\[
H \in \mathbb{R}_+^{(n+2)\times 2}.
\]

\paragraph{Anchor rows.}
Introduce two distinguished indices \(e\) and \(f\). Specify the Gram entries
\[
\widehat{W}_{ee}=1,\qquad \widehat{W}_{ff}=1,\qquad \widehat{W}_{ef}=0.
\]
By Lemma~\ref{lem:anchor-rigidity}, in every feasible realization the
corresponding row vectors satisfy
\[
h_e=(1,0),\qquad h_f=(0,1),
\]
up to permutation of the two coordinates. Since all constraints in the
construction are expressed solely in terms of Gram entries, and Gram entries
are invariant under a simultaneous permutation of the two coordinates of all
row vectors, such a permutation yields an equivalent feasible realization.
Hence we may fix the coordinate order once and for
all so that the canonical normalization above holds.

\paragraph{Variable rows.}
For each arithmetic variable \(x_i\), introduce one index \(u_i\). For every
such index, specify the Gram entry
\[
\widehat{W}_{f u_i}=1.
\]
By Lemma~\ref{lem:variable-encoding}, any feasible realization must then have
\[
h_{u_i}=(x_i,1)
\]
for a uniquely determined scalar \(x_i \in \mathbb{R}_{\ge 0}\).

\paragraph{Constant constraints.}
For every source constraint \(x_i=1\), add the affine side constraint
\[
W_{e u_i}=1.
\]
By Lemma~\ref{lem:coordinate-extraction}, this is equivalent to \(x_i=1\).

\paragraph{Addition constraints.}
For every source constraint
\[
x_i=x_j+x_k,
\]
add the affine side constraint
\[
W_{e u_j}+W_{e u_k}-W_{e u_i}=0.
\]
By Lemma~\ref{lem:coordinate-extraction}, this is equivalent to
\[
x_j+x_k-x_i=0.
\]

\paragraph{Multiplication constraints.}
For every source constraint
\[
x_i=x_jx_k,
\]
add the affine side constraint
\[
W_{u_j u_k}-W_{e u_i}-1=0.
\]
By Lemmas~\ref{lem:multiplication-identity} and
\ref{lem:coordinate-extraction}, this is equivalent to
\[
x_jx_k-x_i=0.
\]

This completes the construction.

\subsection{Encoding size and complexity}
\label{subsec:size}

We now verify that the construction has polynomial size.

\begin{lemma}
\label{lem:size}
The mapping \(\mathcal{I}\mapsto \mathcal{J}\), from an instance
\(\mathcal{I}\) of \textsc{ETR-AMI} to the constructed instance
\(\mathcal{J}\) of constrained rank-2 nonnegative Gram feasibility, is
computable in time polynomial in the size of \(\mathcal{I}\).
\end{lemma}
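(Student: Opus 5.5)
We argue by counting the objects the construction produces and checking that each is emitted with constant work and constant-size rational data. Let the instance $\mathcal{I}$ have $n$ variables and $m$ constraints, so its size is $\Theta(n+m)$ up to the logarithmic cost of writing indices. The first step counts rows: the target instance $\mathcal{J}$ has exactly $n+2$ row indices, namely the anchors $e,f$ and one variable row $u_i$ for each $x_i$. The realization matrix $H$ is therefore $(n+2)\times 2$, with $2(n+2)$ existential variables, although these are not written down explicitly in $\mathcal{J}$.

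The second step counts specified entries. There are three anchor entries $\widehat W_{ee},\widehat W_{ff},\widehat W_{ef}$ and $n$ entries $\widehat W_{fu_i}=1$, giving $n+3$ specified pairs. Each prescribed value lies in $\{0,1\}$, and each index is written in $O(\log n)$ bits.

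The third step counts side constraints. Each source constraint produces exactly one affine side constraint, by a constant-size rule depending only on its type:
\[
x_i=1 \mapsto W_{eu_i}=1,\qquad
x_i=x_j+x_k \mapsto W_{eu_j}+W_{eu_k}-W_{eu_i}=0,\qquad
x_i=x_jx_k \mapsto W_{u_ju_k}-W_{eu_i}-1=0.
\]
Each output constraint involves at most three entries of $W$, all coefficients lie in $\{-1,1\}$, and all constants lie in $\{0,1\}$. So each has encoding length $O(\log n)$, and the total output size is $O((n+m)\log n)$.

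Finally, the map is a single pass over the variables and then over the constraints, using a fixed translation table. It runs in time linear in the output size, which is polynomial in $|\mathcal{I}|$.

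The only point requiring care, and the closest thing to an obstacle, is to confirm that the construction never writes down an explicit real number. Values such as $x_i$ and the full Gram matrix $W$ appear only through the correctness lemmas, namely Lemma~\ref{lem:variable-encoding} and Lemma~\ref{lem:multiplication-identity}. They are not part of the output, so all emitted data are small integers and indices, and no bit-size blow-up can occur.
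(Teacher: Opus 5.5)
Your proposal is correct and follows essentially the same route as the paper: count the $n+2$ indices, the $n+3$ specified entries, and the $m$ affine side constraints (one per source constraint), then note that all coefficients and constants lie in $\{-1,0,1\}$. Your accounting is slightly more careful than the paper's, since you charge $O(\log n)$ bits per index and get $O((n+m)\log n)$ where the paper states $O(n+m)$, but either bound is polynomial.
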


\begin{proof}
Suppose \(\mathcal{I}\) has \(n\) variables and \(m\) arithmetic constraints.
The target instance introduces exactly \(n+2\) indices: two anchor indices
\(e,f\), and one index \(u_i\) for each source variable \(x_i\).

The number of specified Gram entries is linear in \(n\): three anchor entries
\[
\widehat{W}_{ee},\ \widehat{W}_{ff},\ \widehat{W}_{ef},
\]
together with one entry \(\widehat{W}_{f u_i}\) for each \(i=1,\dots,n\).

Each arithmetic constraint of the source instance contributes exactly one
affine side constraint in the target instance. Hence the total number of
affine constraints is \(m\).

Moreover, every coefficient appearing in the target description belongs to
\[
\{-1,0,1\},
\]
and every constant term belongs to
\[
\{0,1\}.
\]
Thus the bit-length of every numeric coefficient is \(O(1)\). It follows that
the full encoding length of the target instance is \(O(n+m)\), and the
construction can be carried out in polynomial time.
\end{proof}

\subsection{Soundness}
\label{subsec:soundness}

We next show that every satisfying assignment of the arithmetic instance
produces a feasible nonnegative Gram realization.

\begin{lemma}[Soundness]
\label{lem:soundness}
If the \textsc{ETR-AMI} instance \(\mathcal{I}\) is satisfiable, then the
constructed constrained rank-2 nonnegative Gram instance \(\mathcal{J}\) is
feasible.
\end{lemma}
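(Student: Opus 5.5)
The plan is to build an explicit realization from a satisfying assignment and check each constraint of $\mathcal{J}$ directly. Fix a nonnegative assignment $x^\ast=(x_1^\ast,\dots,x_n^\ast)\in\mathbb{R}_{\ge 0}^n$ satisfying every constraint of $\mathcal{I}$. Define $H\in\mathbb{R}^{(n+2)\times 2}$ row by row:
\[
h_e=(1,0),\qquad h_f=(0,1),\qquad h_{u_i}=(x_i^\ast,1)\quad (i=1,\dots,n).
\]
Every entry is nonnegative, because $x_i^\ast\ge 0$ by the definition of \textsc{ETR-AMI}. Hence $H\in\mathbb{R}_+^{(n+2)\times 2}$, and we set $W=HH^\top$.

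Next I would check the specified entries. A direct computation gives $W_{ee}=1$, $W_{ff}=1$ and $W_{ef}=0$. For each $i$, $W_{fu_i}=\langle(0,1),(x_i^\ast,1)\rangle=1$. So all prescribed values $\widehat{W}$ hold.

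Then I would check the affine side constraints. The realization has exactly the normalized form assumed in Lemmas~\ref{lem:coordinate-extraction} and~\ref{lem:multiplication-identity}. These lemmas give
\[
W_{eu_i}=x_i^\ast,\qquad W_{u_ju_k}=x_j^\ast x_k^\ast+1 .
\]
Substituting into the three types of side constraint, as in Proposition~\ref{prop:arithmetic-realization}, turns each one into the corresponding source constraint evaluated at $x^\ast$:
\begin{align*}
W_{eu_i}-1 &= x_i^\ast-1,\\
W_{eu_j}+W_{eu_k}-W_{eu_i} &= x_j^\ast+x_k^\ast-x_i^\ast,\\
W_{u_ju_k}-W_{eu_i}-1 &= x_j^\ast x_k^\ast-x_i^\ast.
\end{align*}
Each right-hand side vanishes because $x^\ast$ satisfies $\mathcal{I}$. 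Therefore $H$ is a feasible realization of $\mathcal{J}$.

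No step is a genuine obstacle; this is the easy direction. The only points needing care are that nonnegativity of $H$ uses the nonnegativity of the \textsc{ETR-AMI} variables, and that Lemmas~\ref{lem:coordinate-extraction} and~\ref{lem:multiplication-identity} apply. Those lemmas apply because we construct the rows in canonical form ourselves, so we never need to invoke anchor rigidity here.
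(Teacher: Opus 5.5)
Your proposal is correct and follows essentially the same argument as the paper: the explicit realization $h_e=(1,0)$, $h_f=(0,1)$, $h_{u_i}=(x_i,1)$, followed by a constraint-by-constraint check. The only cosmetic difference is that you cite Lemmas~\ref{lem:coordinate-extraction} and~\ref{lem:multiplication-identity} where the paper recomputes the inner products directly. This is legitimate, since your rows are already in the normalized form those lemmas assume.
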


\begin{proof}
Assume that \(\mathcal{I}\) has a satisfying assignment
\[
x_1,\dots,x_n \in \mathbb{R}_{\ge 0}.
\]
Define vectors in \(\mathbb{R}_+^2\) by
\[
h_e=(1,0),\qquad h_f=(0,1),
\]
and, for each variable \(x_i\),
\[
h_{u_i}=(x_i,1).
\]
Since each \(x_i\ge 0\), all these vectors lie in \(\mathbb{R}_+^2\).

Let \(H\) be the matrix whose rows are the vectors just defined, and let
\[
W = HH^\top.
\]
We verify that this realization satisfies all specified entries and affine side
constraints of \(\mathcal{J}\).

First, the anchor constraints hold:
\[
W_{ee}=\langle (1,0),(1,0)\rangle = 1,
\]
\[
W_{ff}=\langle (0,1),(0,1)\rangle = 1,
\]
\[
W_{ef}=\langle (1,0),(0,1)\rangle = 0.
\]

Second, for each variable row \(u_i\),
\[
W_{f u_i}
=
\langle (0,1),(x_i,1)\rangle
=
1,
\]
so all variable-row constraints are satisfied.

Third, consider a source constraint \(x_i=1\). The corresponding affine side
constraint in \(\mathcal{J}\) is \(W_{e u_i}=1\). Since
\[
W_{e u_i}
=
\langle (1,0),(x_i,1)\rangle
=
x_i,
\]
this holds because the assignment satisfies \(x_i=1\).

Fourth, consider a source addition constraint \(x_i=x_j+x_k\). The
corresponding affine side constraint is
\[
W_{e u_j}+W_{e u_k}-W_{e u_i}=0.
\]
Using the definition of the vectors,
\[
W_{e u_j}+W_{e u_k}-W_{e u_i}
=
x_j+x_k-x_i
=
0.
\]

Finally, consider a source multiplication constraint \(x_i=x_jx_k\). The
corresponding affine side constraint is
\[
W_{u_j u_k}-W_{e u_i}-1=0.
\]
Now
\[
W_{u_j u_k}
=
\langle (x_j,1),(x_k,1)\rangle
=
x_jx_k+1,
\]
and
\[
W_{e u_i}=x_i.
\]
Hence
\[
W_{u_j u_k}-W_{e u_i}-1
=
(x_jx_k+1)-x_i-1
=
x_jx_k-x_i
=
0.
\]

Therefore all specified entries and all affine side constraints in
\(\mathcal{J}\) are satisfied. Thus \(\mathcal{J}\) is feasible.
\end{proof}

\subsection{Completeness}
\label{subsec:completeness}

We now show that every feasible realization of the target instance yields a
satisfying arithmetic assignment.

\begin{lemma}[Completeness]
\label{lem:completeness}
If the constructed constrained rank-2 nonnegative Gram instance
\(\mathcal{J}\) is feasible, then the original \textsc{ETR-AMI} instance
\(\mathcal{I}\) is satisfiable.
\end{lemma}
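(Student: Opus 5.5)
Take an arbitrary feasible realization $H\in\mathbb{R}_+^{(n+2)\times 2}$ of $\mathcal{J}$, normalize it with the anchor and variable-row lemmas, and read off the assignment $x_i := W_{eu_i}$. The argument runs in three steps.

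\textbf{Step 1: fix the anchors.} The specified entries $\widehat W_{ee}=\widehat W_{ff}=1$ and $\widehat W_{ef}=0$ put us in the setting of Lemma~\ref{lem:anchor-rigidity}. So, up to swapping the two coordinates, $h_e=(1,0)$ and $h_f=(0,1)$.

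If the swap is needed, apply it simultaneously to every row of $H$. This preserves all entries of $HH^\top$, so the specified entries and affine side constraints still hold. It also preserves nonnegativity, so we again have a feasible realization. From now on we may assume $h_e=(1,0)$ and $h_f=(0,1)$ exactly.

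This is the only step that needs any care. One must note that the permutation acts on all rows at once, not just on the anchors, and that nothing in $\mathcal{J}$ refers to coordinates directly.

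\textbf{Step 2: read off the variables.} Each $u_i$ satisfies $\widehat W_{fu_i}=1$. By Lemma~\ref{lem:variable-encoding}, $h_{u_i}=(x_i,1)$ for a unique $x_i\ge 0$. Lemma~\ref{lem:coordinate-extraction} then gives $W_{eu_i}=x_i$. Nonnegativity of the $x_i$ is automatic, since $H$ has nonnegative entries. This defines the candidate assignment $(x_1,\dots,x_n)\in\mathbb{R}_{\ge 0}^n$.

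\textbf{Step 3: check every source constraint.} Each constraint of $\mathcal{I}$ produced exactly one affine side constraint in $\mathcal{J}$, and that side constraint holds in the realization. Proposition~\ref{prop:arithmetic-realization} applies because its hypotheses, the anchor normalization and the variable-row form, were established in Steps~1 and~2.

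\begin{itemize}
\item For $x_i=1$: the side constraint is $W_{eu_i}=1$, which is equivalent to $x_i=1$.
\item For $x_i=x_j+x_k$: the side constraint is $W_{eu_j}+W_{eu_k}-W_{eu_i}=0$. This equals $x_j+x_k-x_i=0$, so the constraint holds.
\item For $x_i=x_jx_k$: Lemma~\ref{lem:multiplication-identity} gives $W_{u_ju_k}=x_jx_k+1$. The side constraint $W_{u_ju_k}-W_{eu_i}-1=0$ therefore reduces to $x_jx_k-x_i=0$.
\end{itemize}

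This covers both the case $j\neq k$ and the squaring case $j=k$, where $W_{u_ju_j}=x_j^2+1$. Hence every constraint of $\mathcal{I}$ is satisfied by $(x_1,\dots,x_n)$, so $\mathcal{I}$ is satisfiable.
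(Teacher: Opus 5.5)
Your proposal is correct and follows essentially the same route as the paper's proof. Both normalize the anchors via Lemma~\ref{lem:anchor-rigidity} with a simultaneous coordinate swap, extract $h_{u_i}=(x_i,1)$ from $W_{fu_i}=1$, and check each constant, addition, and multiplication constraint using the coordinate-extraction and multiplication identities; your remarks that the swap preserves nonnegativity and that the squaring case $j=k$ is covered only make explicit what the paper leaves implicit.
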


\begin{proof}
Assume that \(\mathcal{J}\) has a feasible realization
\[
H \in \mathbb{R}_+^{(n+2)\times 2}.
\]
Let the corresponding row vectors be denoted by
\(h_e,h_f,h_{u_1},\dots,h_{u_n}\).

By the anchor constraints
\[
W_{ee}=1,\qquad W_{ff}=1,\qquad W_{ef}=0,
\]
Lemma~\ref{lem:anchor-rigidity} implies that, up to permutation of the two
coordinates,
\[
h_e=(1,0),\qquad h_f=(0,1).
\]
As in the construction, because a simultaneous permutation of coordinates
preserves all Gram entries, we may fix the coordinate order so that this
normalization holds.

For each variable row \(u_i\), the specified entry \(W_{f u_i}=1\) and
Lemma~\ref{lem:variable-encoding} imply that
\[
h_{u_i}=(x_i,1)
\]
for a uniquely determined scalar \(x_i\in\mathbb{R}_{\ge 0}\).

We claim that the extracted numbers \(x_1,\dots,x_n\) satisfy every source
constraint of \(\mathcal{I}\).

\paragraph{Constant constraints.}
Suppose \(\mathcal{I}\) contains the constraint \(x_i=1\). Then \(\mathcal{J}\)
contains the affine side constraint
\[
W_{e u_i}=1.
\]
By Lemma~\ref{lem:coordinate-extraction},
\[
x_i=W_{e u_i}=1.
\]

\paragraph{Addition constraints.}
Suppose \(\mathcal{I}\) contains the constraint
\[
x_i=x_j+x_k.
\]
Then \(\mathcal{J}\) contains
\[
W_{e u_j}+W_{e u_k}-W_{e u_i}=0.
\]
Applying Lemma~\ref{lem:coordinate-extraction} to each term yields
\[
x_j+x_k-x_i=0,
\]
hence \(x_i=x_j+x_k\).

\paragraph{Multiplication constraints.}
Suppose \(\mathcal{I}\) contains the constraint
\[
x_i=x_jx_k.
\]
Then \(\mathcal{J}\) contains
\[
W_{u_j u_k}-W_{e u_i}-1=0.
\]
By Lemma~\ref{lem:multiplication-identity},
\[
W_{u_j u_k}=x_jx_k+1,
\]
and by Lemma~\ref{lem:coordinate-extraction},
\[
W_{e u_i}=x_i.
\]
Therefore
\[
(x_jx_k+1)-x_i-1=0,
\]
which simplifies to
\[
x_i=x_jx_k.
\]

Since every source constraint is satisfied, the extracted nonnegative tuple
\[
(x_1,\dots,x_n)
\]
is a satisfying assignment of \(\mathcal{I}\). Thus \(\mathcal{I}\) is
satisfiable.
\end{proof}

\subsection{Main complexity consequence}
\label{subsec:main-consequence}

We can now state the main theorem.

\begin{theorem}
\label{thm:hardness}
Constrained rank-2 nonnegative Gram feasibility is
\(\exists\mathbb{R}\)-hard.
\end{theorem}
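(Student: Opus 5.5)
The plan is to compose the lemmas of this section into a polynomial-time many-one reduction from \textsc{ETR-AMI}. We take as given that \textsc{ETR-AMI} is $\exists\mathbb{R}$-hard; this is the normal-form fact recalled after Definition~\ref{def:etrami}. Since $\exists\mathbb{R}$-hardness is closed under polynomial-time many-one reductions, it then suffices to exhibit a map $\mathcal{I}\mapsto\mathcal{J}$ with three properties:
\begin{enumerate}[label=(\alph*)]
\item it is computable in polynomial time;
\item $\mathcal{I}$ satisfiable implies $\mathcal{J}$ feasible;
\item $\mathcal{J}$ feasible implies $\mathcal{I}$ satisfiable.
\end{enumerate}

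The map is the construction of Subsection~\ref{subsec:construction}. It outputs an instance in the exact format of Definition~\ref{def:gramfeas} with $r=2$: rational specified entries, rational affine side constraints, and rank parameter $2$. Property (a) is Lemma~\ref{lem:size}, which also bounds the output size by $O(n+m)$ and all coefficients by $O(1)$ bits. Property (b) is Lemma~\ref{lem:soundness}. Property (c) is Lemma~\ref{lem:completeness}. Together, (b) and (c) say that $\mathcal{I}$ is a yes-instance if and only if $\mathcal{J}$ is. Chaining with the hardness of \textsc{ETR-AMI} then gives the theorem.

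The one step I expect to need care is the coordinate-permutation normalization used inside completeness. Lemma~\ref{lem:anchor-rigidity} fixes the anchors only up to swapping coordinates. If $h_e=(0,1)$ and $h_f=(1,0)$, I would apply the swap $(a,b)\mapsto(b,a)$ to every row of $H$ simultaneously. This leaves $HH^\top$ unchanged and preserves nonnegativity, so we get another feasible realization that has the canonical anchors. Lemmas~\ref{lem:variable-encoding}, \ref{lem:coordinate-extraction} and \ref{lem:multiplication-identity} then apply verbatim to this new realization. The extracted values $x_i=W_{eu_i}$ are nonnegative because $H\ge 0$, which matches the domain $\mathbb{R}_{\ge 0}$ required by \textsc{ETR-AMI}.

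There are two further small points to check. First, the problem size must be measured so that a reduction linear in $n+m$ is polynomial; Lemma~\ref{lem:size} gives this. Second, the hardness of \textsc{ETR-AMI} must be the nonnegative-variable version exactly as stated in Definition~\ref{def:etrami}; I simply invoke the cited normal-form results for this.
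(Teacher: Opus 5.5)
Your proposal is correct and follows the paper's proof: like the paper, you compose Lemmas~\ref{lem:size}, \ref{lem:soundness} and \ref{lem:completeness} into a polynomial-time many-one reduction from \textsc{ETR-AMI} and then invoke the cited $\exists\mathbb{R}$-hardness of \textsc{ETR-AMI}. Your explicit handling of the coordinate swap and of the nonnegativity of the extracted values $x_i$ matches what the paper already does inside the construction and the completeness lemma.
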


\begin{proof}
By Lemma~\ref{lem:size}, the reduction described above is computable in
polynomial time. By Lemma~\ref{lem:soundness}, every satisfying assignment of
the source instance \(\mathcal{I}\) yields a feasible realization of the
target instance \(\mathcal{J}\). Conversely, by
Lemma~\ref{lem:completeness}, every feasible realization of \(\mathcal{J}\)
yields a satisfying assignment of \(\mathcal{I}\).

Therefore the reduction is correct. Since arithmetic systems of the type used
in Definition~\ref{def:etrami} arise from standard normal-form reductions for
the existential theory of the reals and serve as canonical source problems in
\(\exists\mathbb{R}\)-hardness proofs
\cite{schaefer2010complexity,cardinal2013survey}, it follows that constrained
rank-2 nonnegative Gram feasibility is \(\exists\mathbb{R}\)-hard.
\end{proof}

\section{Consequences and Complexity Classification}
\label{sec:consequences}

In this section we record formal consequences of the reduction established in
Section~\ref{sec:reduction}. Throughout, all statements are derived directly
from the results already proved in Sections~\ref{sec:background},
\ref{sec:geometry}, and \ref{sec:reduction}.

\subsection{The rank-$2$ classification}

We first combine the membership result of
Proposition~\ref{prop:er-membership} with the hardness result of
Theorem~\ref{thm:hardness}.

\begin{theorem}
\label{thm:classification-r2}
Constrained rank-$2$ nonnegative Gram feasibility is
$\exists\mathbb{R}$-complete.
\end{theorem}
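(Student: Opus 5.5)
The plan is to combine the two halves already established. For membership, specialize Proposition~\ref{prop:er-membership} to $r=2$. That proposition handles an arbitrary rank parameter $r$: the unknowns are the $2n$ entries $h_{ik}$. Each specified entry becomes a quadratic equation $h_{i1}h_{j1}+h_{i2}h_{j2}=\widehat W_{ij}$, each affine side constraint becomes a quadratic equation after substitution, and nonnegativity becomes the linear inequalities $h_{ik}\ge 0$. This existential sentence has size linear in the instance, so the rank-$2$ problem lies in $\exists\mathbb{R}$.

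For hardness, invoke Theorem~\ref{thm:hardness}. The reduction $\mathcal I\mapsto\mathcal J$ from \textsc{ETR-AMI} is polynomial by Lemma~\ref{lem:size} and preserves feasibility in both directions by Lemmas~\ref{lem:soundness} and~\ref{lem:completeness}. Combining membership with hardness gives $\exists\mathbb{R}$-completeness by definition of completeness under polynomial-time many-one reductions.

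The only point I would check carefully is that the hardness of \textsc{ETR-AMI} itself, in the precise form of Definition~\ref{def:etrami}, is justified. That form has nonnegative variables and constraints $x_i=1$, $x_i=x_j+x_k$, $x_i=x_jx_k$. The standard normal form, as in the cited work of Schaefer and Cardinal, uses real variables, constants $1$, addition and multiplication, and sometimes restricts to a bounded or positive domain.

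If the cited form is stated over all of $\mathbb{R}$, I would first pass to nonnegative variables. One way is to write each real variable as $x=x^+-x^-$ and rewrite each original constraint with auxiliary nonnegative variables using only additions. For example, $z=x+y$ becomes $z^++x^-+y^-=z^-+x^++y^+$, split into two addition gates through a common auxiliary sum. Multiplication expands into four nonnegative products, which are then combined by addition gates. The result is an equivalent instance in the nonnegative form, and its size grows only by a constant factor.

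This normal-form bookkeeping is the main obstacle, but it is routine. Once it is in place, the theorem follows immediately from the two cited results.
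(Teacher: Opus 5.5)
Your proposal is correct and matches the paper's proof: membership is Proposition~\ref{prop:er-membership} specialized to $r=2$, and hardness is Theorem~\ref{thm:hardness}, whose reduction rests on Lemmas~\ref{lem:size}, \ref{lem:soundness} and~\ref{lem:completeness}. Your extra $x=x^+-x^-$ argument, which passes from real-valued \textsc{ETR-AMI} to the nonnegative form of Definition~\ref{def:etrami}, is sound and supplies a step the paper covers only by citation.
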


\begin{proof}
By Proposition~\ref{prop:er-membership}, constrained rank-$r$ nonnegative
Gram feasibility belongs to $\exists\mathbb{R}$ for every rank parameter
$r$. In particular, the rank-$2$ case belongs to $\exists\mathbb{R}$.

By Theorem~\ref{thm:hardness}, constrained rank-$2$ nonnegative Gram
feasibility is $\exists\mathbb{R}$-hard. Combining these two statements yields
the claimed $\exists\mathbb{R}$-completeness.
\end{proof}

Thus the constrained rank-$2$ problem belongs to the class of
$\exists\mathbb{R}$-complete semialgebraic feasibility problems studied in
computational geometry and real algebraic complexity
\cite{schaefer2010complexity,cardinal2013survey}. For comparison, hardness
results for nonnegative matrix factorization itself are also known; in
particular, determining the nonnegative rank of a matrix is NP-hard
\cite{vavasis2009complexity}.

\subsection{Arithmetic representability}

The reduction of Section~\ref{sec:reduction} may be restated as an exact
representability consequence.

\begin{corollary}
\label{cor:arithmetic-representability}
Let
\[
\mathcal{S}=\{x_1,\dots,x_n\}
\]
be a finite set of variables ranging over \(\mathbb{R}_{\ge 0}\), and let
\(\mathcal{C}\) be a finite family of constraints of the forms
\[
x_i=1,\qquad x_i=x_j+x_k,\qquad x_i=x_jx_k.
\]
Then one can construct, in polynomial time, an instance of constrained
rank-$2$ nonnegative Gram feasibility such that the constructed instance is
feasible if and only if there exists a nonnegative assignment to the variables
in \(\mathcal{S}\) satisfying all constraints in \(\mathcal{C}\).
\end{corollary}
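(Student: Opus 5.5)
This corollary is a restatement of the reduction in Section~\ref{sec:reduction}. The plan is to take the map $\mathcal{I}\mapsto\mathcal{J}$ of Section~\ref{subsec:construction} and check its three properties. The pair $(\mathcal{S},\mathcal{C})$ is exactly an \textsc{ETR-AMI} instance $\mathcal{I}$ in the sense of Definition~\ref{def:etrami}, so we apply the construction verbatim. It uses two anchor indices $e,f$ with $\widehat{W}_{ee}=\widehat{W}_{ff}=1$ and $\widehat{W}_{ef}=0$. For each $x_i\in\mathcal{S}$ it adds one variable index $u_i$ with $\widehat{W}_{fu_i}=1$. Each constraint in $\mathcal{C}$ becomes one affine side constraint:
\[
W_{eu_i}=1,\qquad W_{eu_j}+W_{eu_k}-W_{eu_i}=0,\qquad W_{u_ju_k}-W_{eu_i}-1=0,
\]
according to its type. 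The rank parameter is $r=2$.

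Polynomial-time computability is exactly Lemma~\ref{lem:size}. The output has $n+2$ indices, $n+3$ specified entries and $|\mathcal{C}|$ affine constraints, all with coefficients in $\{-1,0,1\}$.

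For the equivalence, the direction ``satisfying assignment $\Rightarrow$ feasible instance'' is Lemma~\ref{lem:soundness}: set $h_e=(1,0)$, $h_f=(0,1)$ and $h_{u_i}=(x_i,1)$. The converse is Lemma~\ref{lem:completeness}, which has three steps:
\begin{itemize}
\item normalize the anchors by Lemma~\ref{lem:anchor-rigidity};
\item obtain $h_{u_i}=(x_i,1)$ with $x_i\ge 0$ by Lemma~\ref{lem:variable-encoding};
\item read off each source constraint via Proposition~\ref{prop:arithmetic-realization}.
\end{itemize}
Together these give the ``if and only if'' of the corollary.

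The only point needing care is the coordinate-swap normalization in the completeness direction. Lemma~\ref{lem:anchor-rigidity} fixes the anchors only up to a permutation of the two coordinates. The fix is to apply the same swap to every row $h_i$. This preserves every Gram entry $\langle h_i,h_j\rangle$, and hence every specified value and every affine constraint, as well as nonnegativity. So without loss of generality $h_e=(1,0)$ and $h_f=(0,1)$ before invoking Lemma~\ref{lem:variable-encoding}.

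Beyond this, no new argument is required; the corollary is the conjunction of Lemmas~\ref{lem:size}, \ref{lem:soundness} and \ref{lem:completeness}.
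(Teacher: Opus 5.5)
Your proposal is correct and matches the paper's proof: apply the construction of Section~\ref{subsec:construction}, take polynomial-time computability from Lemma~\ref{lem:size}, the ``if'' direction from Lemma~\ref{lem:soundness}, and the ``only if'' direction from Lemma~\ref{lem:completeness}. Your remark on the simultaneous coordinate swap is the same normalization the paper already carries out inside the completeness lemma.
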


\begin{proof}
This is exactly the construction of
Section~\ref{subsec:construction}. Polynomial-time computability follows from
Lemma~\ref{lem:size}. The ``if'' direction follows from
Lemma~\ref{lem:soundness}, and the ``only if'' direction follows from
Lemma~\ref{lem:completeness}. Hence the constructed instance has the stated
equivalence property.
\end{proof}

\subsection{Propagation to higher ranks}

The reduction for rank \(2\) extends immediately to every larger fixed rank.

\begin{lemma}
\label{lem:rank-lift}
Let \(r\ge 2\). Every feasible instance of constrained rank-$2$
nonnegative Gram feasibility is also feasible as an instance of
constrained rank-$r$ nonnegative Gram feasibility.
\end{lemma}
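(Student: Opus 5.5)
The plan is to pad a rank-$2$ realization with zero columns. Suppose an instance of Definition~\ref{def:gramfeas} with target rank $2$ is feasible, witnessed by $H\in\mathbb{R}_+^{n\times 2}$ with rows $h_1,\dots,h_n$. I will define $H'\in\mathbb{R}_+^{n\times r}$ by
\[
H' = \bigl[\,H \;\; 0_{n\times(r-2)}\,\bigr],
\]
so that the $i$-th row of $H'$ is $h_i'=(h_i,0,\dots,0)\in\mathbb{R}_+^r$. Nonnegativity of $H'$ is immediate: its first two columns are those of $H$, and every other entry is zero.

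The key step is the identity $H'H'^\top = HH^\top$. Entrywise, this reads
\[
\langle h_i',h_j'\rangle=\sum_{k=1}^{2}h_{ik}h_{jk}+\sum_{k=3}^{r}0\cdot 0=\langle h_i,h_j\rangle ,
\]
so $W'=W$ in every entry. The feasibility conditions of Definition~\ref{def:gramfeas} are the prescribed values $\widehat{W}_{ij}$ together with the affine side constraints, and both are expressed solely through Gram entries. Since $W'=W$, every specified entry and every affine side constraint satisfied by $H$ is satisfied by $H'$. Hence the same instance, read with rank parameter $r$, is feasible.

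I do not expect a real obstacle here; the only care needed is to note that the instance data, meaning the specified pairs and the affine constraints, do not depend on $r$, so the ``same instance'' is well defined at rank $r$.

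The converse direction is not needed for this lemma. For the hardness lift to rank $r$, however, I expect the following to be the main issue. At rank $r$, anchor rigidity (Lemma~\ref{lem:anchor-rigidity}) only forces $h_e,h_f$ to be orthogonal unit vectors with disjoint supports in $\mathbb{R}_+^r$, and the variable rows could then pick up extra coordinates. That fact would have to be handled later, for instance by adding extra anchors to kill the surplus dimensions, but it is not part of the present statement.
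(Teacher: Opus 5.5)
Your proof is correct and is essentially the paper's argument. The paper embeds each row as $\iota(h_i)=(a_i,b_i,0,\dots,0)$, notes that all inner products are preserved, and concludes because the constraints of Definition~\ref{def:gramfeas} depend only on Gram entries. Your closing remark is also apt: this one-directional lemma does not by itself make the identity map a reduction, although the paper's Theorem~\ref{thm:classification-fixed-r} uses it as if it did, and the sound higher-rank hardness proof, Theorem~\ref{thm:hardness-fixed-r}, does add anchors $e_3,\dots,e_r$ to eliminate the surplus coordinates.
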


\begin{proof}
Let \(r\ge 2\), and consider a feasible instance of constrained rank-$2$
nonnegative Gram feasibility. Let
\[
h_i=(a_i,b_i)\in\mathbb{R}_+^2
\]
be a feasible realization. Define
\[
\iota(h_i)=(a_i,b_i,0,\dots,0)\in\mathbb{R}_+^r.
\]
Then for all \(i,j\),
\[
\langle \iota(h_i),\iota(h_j)\rangle
=
a_i a_j+b_i b_j
=
\langle h_i,h_j\rangle.
\]
Hence all Gram entries are preserved. Since the constraints of
Definition~\ref{def:gramfeas} depend only on Gram entries, the embedded
vectors satisfy the same specified entries and affine side constraints.
Therefore the same instance is feasible in rank \(r\).
\end{proof}

\begin{theorem}
\label{thm:classification-fixed-r}
For every fixed integer \(r\ge 2\), constrained rank-$r$ nonnegative Gram
feasibility is \(\exists\mathbb{R}\)-complete.
\end{theorem}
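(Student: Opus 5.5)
Membership is immediate from Proposition~\ref{prop:er-membership}, which holds for every rank parameter $r$, so the work lies in hardness for each fixed $r\ge 2$. Lemma~\ref{lem:rank-lift} alone does not settle this. It transfers feasibility upward, but for a many-one reduction we also need the converse: a rank-$r$ realization of the constructed instance $\mathcal{J}$ must yield a satisfying assignment of $\mathcal{I}$. That converse can fail for the unmodified instance. In $\mathbb{R}_+^r$ with $r\ge 3$, the anchor constraints only force $h_e,h_f$ to be unit vectors with disjoint supports, for instance $h_e=(1/\sqrt2,1/\sqrt2,0)$. A variable row with $W_{fu}=1$ then has extra freedom, and Lemma~\ref{lem:multiplication-identity} breaks down. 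I would therefore modify the construction of Section~\ref{subsec:construction} for each fixed $r$ so that the geometry collapses back to rank $2$.

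\emph{Modified construction.} Introduce $r$ anchor indices $e_1,\dots,e_r$ with specified entries $\widehat W_{e_ae_a}=1$ and $\widehat W_{e_ae_b}=0$ for $a\neq b$. Use $e_1$ in the role of $e$ and $e_2$ in the role of $f$. For each variable row $u_i$, specify $\widehat W_{e_2u_i}=1$, and additionally $\widehat W_{e_au_i}=0$ for $3\le a\le r$. The constant, addition and multiplication side constraints are exactly those of Section~\ref{subsec:construction}, with $e$ replaced by $e_1$. For fixed $r$ this adds $O(r^2+rn)$ entries with coefficients in $\{0,1\}$, so the size bound of Lemma~\ref{lem:size} persists.

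\emph{Generalized anchor rigidity.} This is the step I expect to need the most care. As in Lemma~\ref{lem:anchor-rigidity}, nonnegativity together with $\langle h_{e_a},h_{e_b}\rangle=0$ forces the supports of the $r$ vectors $h_{e_a}$ to be pairwise disjoint. Each support is nonempty because each vector has unit norm. Since $r$ pairwise disjoint nonempty subsets of $\{1,\dots,r\}$ must all be singletons, each $h_{e_a}$ is a standard basis vector. After a simultaneous permutation of coordinates, which preserves all Gram entries, we get $h_{e_a}$ equal to the $a$-th unit vector.

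\emph{Variable rows and the two directions.} Given this normalization, $W_{e_au_i}=0$ for $a\ge3$ kills coordinates $3,\dots,r$ of $h_{u_i}$, and $W_{e_2u_i}=1$ sets its second coordinate to $1$. Hence $h_{u_i}=(x_i,1,0,\dots,0)$ with $x_i\ge0$. Lemmas~\ref{lem:coordinate-extraction} and~\ref{lem:multiplication-identity}, with identical computations, then give the equivalences of Proposition~\ref{prop:arithmetic-realization}. This makes completeness a verbatim copy of Lemma~\ref{lem:completeness}. For soundness, take the rank-$2$ realization of Lemma~\ref{lem:soundness}, pad it with zeros as in Lemma~\ref{lem:rank-lift}, and set $h_{e_a}$ to be the unit vectors for $a\ge 3$. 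All new constraints hold trivially, since the padded variable rows have zero entries in coordinates $3,\dots,r$. Combining the two directions with membership yields $\exists\mathbb{R}$-completeness for every fixed $r\ge2$.
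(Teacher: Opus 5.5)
Your proof is correct, but it does not follow the argument the paper attaches to this theorem. The paper's hardness proof sends each rank-$2$ instance to itself with the rank parameter raised to $r$ and cites Lemma~\ref{lem:rank-lift}. That would give a one-line proof reusing Theorem~\ref{thm:hardness} unchanged. However, as you observe, it only shows that yes-instances are preserved, and the converse genuinely fails.

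For instance, the \textsc{ETR-AMI} system $x_1=1$, $x_2=1$, $x_0=x_1x_2$, $x_1=x_0+x_2$ is unsatisfiable, since it forces both $x_0=1$ and $x_0=0$. Its rank-$2$ image is therefore infeasible by Lemma~\ref{lem:completeness}. Yet that image is feasible in rank $3$ via $h_e=(1/\sqrt2,1/\sqrt2,0)$, $h_f=h_{u_0}=(0,0,1)$, $h_{u_1}=(\sqrt2,0,1)$, $h_{u_2}=(0,\sqrt2,1)$. This is exactly the rotated-anchor freedom you describe.

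Your repair uses $r$ Kronecker-delta anchors, $\widehat W_{e_2u_i}=1$, $\widehat W_{e_au_i}=0$ for $a\ge 3$, and the pigeonhole argument on supports. It costs $O(r^2+rn)$ extra specified entries but yields a genuine many-one reduction. It is precisely the construction the paper gives afterwards in Lemma~\ref{lem:anchor-frame-rigidity} and Theorem~\ref{thm:hardness-fixed-r}, from which the same statement is re-derived as Theorem~\ref{thm:classification-fixed}.

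So your argument coincides with the paper's second, valid proof, and it is the one to use here. The only cosmetic difference is that you obtain soundness by padding the realization of Lemma~\ref{lem:soundness} rather than writing the vectors down directly.
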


\begin{proof}
Fix \(r\ge 2\). By Proposition~\ref{prop:er-membership}, constrained
rank-$r$ nonnegative Gram feasibility belongs to \(\exists\mathbb{R}\).

To prove hardness, reduce from constrained rank-$2$ nonnegative Gram
feasibility by the identity map on instances. If an instance is feasible
in rank \(2\), then by Lemma~\ref{lem:rank-lift} it is feasible in rank \(r\).
Hence every yes-instance of the rank-$2$ problem is a yes-instance of the
rank-$r$ problem. Since rank-$2$ feasibility is \(\exists\mathbb{R}\)-hard
by Theorem~\ref{thm:hardness}, the rank-$r$ problem is
\(\exists\mathbb{R}\)-hard as well. Combining hardness with membership gives
the claim.
\end{proof}

\subsection{Propagation to higher fixed ranks}

The rank-$2$ construction extends to every larger fixed rank by adding
additional anchor rows that force all arithmetic vectors to lie in the
two-dimensional coordinate plane spanned by the first two basis vectors.

\begin{lemma}[Anchor frame rigidity]
\label{lem:anchor-frame-rigidity}
Let \(r\ge 2\), and suppose a symmetric matrix \(W\) admits a rank-$r$
nonnegative Gram realization with distinguished indices
\[
e_1,\dots,e_r
\]
satisfying
\[
W_{e_a e_b}=\delta_{ab}
\qquad \text{for all } a,b\in\{1,\dots,r\},
\]
where \(\delta_{ab}\) denotes the Kronecker delta. Then, up to a common
permutation of the \(r\) coordinates,
\[
h_{e_a}=e_a^{(r)}
\qquad \text{for all } a=1,\dots,r,
\]
where \(e_a^{(r)}\) is the \(a\)-th standard basis vector of
\(\mathbb{R}^r\).
\end{lemma}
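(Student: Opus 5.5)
The plan is to generalize the argument of Lemma~\ref{lem:anchor-rigidity} from two vectors to $r$ vectors. Write $h_{e_a}\in\mathbb{R}_+^r$ for $a=1,\dots,r$, and let $S_a=\{k : (h_{e_a})_k>0\}$ denote the support of $h_{e_a}$.

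\emph{Step 1: disjoint supports.} For $a\neq b$ the constraint $W_{e_ae_b}=0$ reads $\sum_k (h_{e_a})_k(h_{e_b})_k=0$. Every summand is nonnegative, so each summand vanishes, and therefore $S_a\cap S_b=\emptyset$.

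\emph{Step 2: counting.} The constraint $W_{e_ae_a}=1$ forces $h_{e_a}\neq 0$, so each $S_a$ is nonempty. We then have $r$ pairwise disjoint nonempty subsets of $\{1,\dots,r\}$. Since $\sum_a|S_a|\le r$, every $S_a$ is a singleton, say $S_a=\{\pi(a)\}$, and the map $\pi$ is injective and hence a permutation of $\{1,\dots,r\}$. This pigeonhole step is the only point that goes beyond the rank-$2$ case. It is also where the hypothesis that exactly $r$ anchors live in $\mathbb{R}^r$ is used; I expect this to be the main (if mild) obstacle to state cleanly.

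\emph{Step 3: normalization.} Now $h_{e_a}=t_a\,e^{(r)}_{\pi(a)}$ with $t_a>0$, and $W_{e_ae_a}=t_a^2=1$ gives $t_a=1$. Apply the coordinate permutation $\pi^{-1}$ simultaneously to all rows. This preserves every Gram entry, exactly as remarked in the construction of Section~\ref{subsec:construction}, and yields $h_{e_a}=e^{(r)}_a$ for all $a$, as claimed.
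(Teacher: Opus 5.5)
Your proposal is correct and follows essentially the same route as the paper's proof. Both arguments get disjoint supports from nonnegativity, nonempty supports from unit norm, and singleton supports forming a partition by pigeonhole. Both then fix the nonzero entry to $1$ by the norm constraint and finish with a common coordinate permutation.
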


\begin{proof}
Write
\[
h_{e_a}=(h_{e_a,1},\dots,h_{e_a,r})\in\mathbb{R}_+^r.
\]
The Gram constraints imply
\[
\langle h_{e_a},h_{e_a}\rangle = 1
\qquad \text{for all } a,
\]
and
\[
\langle h_{e_a},h_{e_b}\rangle = 0
\qquad \text{for all } a\neq b.
\]
Since all coordinates are nonnegative, the equality
\[
\langle h_{e_a},h_{e_b}\rangle
=
\sum_{t=1}^r h_{e_a,t} h_{e_b,t}
=
0
\]
implies
\[
h_{e_a,t} h_{e_b,t}=0
\qquad \text{for every } t=1,\dots,r.
\]
Thus the supports of the \(r\) vectors \(h_{e_1},\dots,h_{e_r}\) are pairwise
disjoint.

Each \(h_{e_a}\) is nonzero, since it has unit norm. Hence we have \(r\)
pairwise disjoint nonempty supports contained in the \(r\)-element set
\(\{1,\dots,r\}\). Therefore each support must consist of exactly one
coordinate, and these \(r\) singleton supports must form a partition of
\(\{1,\dots,r\}\). Since each vector has norm \(1\), the unique nonzero
coordinate in each \(h_{e_a}\) must equal \(1\). After permuting coordinates,
we obtain
\[
h_{e_a}=e_a^{(r)}
\qquad \text{for all } a=1,\dots,r,
\]
as claimed.
\end{proof}

We can now prove the higher-rank analogue of the main hardness theorem.

\begin{theorem}
\label{thm:hardness-fixed-r}
For every fixed integer \(r\ge 2\), constrained rank-$r$ nonnegative Gram
feasibility is \(\exists\mathbb{R}\)-hard.
\end{theorem}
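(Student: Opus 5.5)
We reduce directly from \textsc{ETR-AMI}, adapting the construction of Section~\ref{subsec:construction} to rank $r$. The identity-map argument of Theorem~\ref{thm:classification-fixed-r} only shows that yes-instances are preserved; a rank-$r$ realization could use the extra coordinates to satisfy constraints the rank-$2$ geometry would forbid. We therefore use the full anchor frame of Lemma~\ref{lem:anchor-frame-rigidity} to force every variable row back into the plane spanned by the first two coordinates.

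\textbf{Construction.} Given an instance $\mathcal{I}$ with variables $x_1,\dots,x_n$, introduce anchor indices $e_1,\dots,e_r$ with specified entries $\widehat{W}_{e_ae_b}=\delta_{ab}$. Introduce one variable index $u_i$ per variable $x_i$, with specified entries
\[
\widehat{W}_{e_2u_i}=1,\qquad \widehat{W}_{e_au_i}=0 \quad (3\le a\le r).
\]
The arithmetic constraints are those of Section~\ref{subsec:construction}, with $e$ replaced by $e_1$ throughout. The instance has $n+r$ indices, $O(r^2+rn)$ specified entries, and $m$ affine constraints with coefficients in $\{-1,0,1\}$. Since $r$ is fixed, the map is polynomial time, as in Lemma~\ref{lem:size}.

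\textbf{Completeness.} By Lemma~\ref{lem:anchor-frame-rigidity}, after a common coordinate permutation we have $h_{e_a}=e_a^{(r)}$; this permutation preserves all Gram entries and hence feasibility. Then $W_{e_au_i}$ is the $a$-th coordinate of $h_{u_i}$. The specified entries therefore force
\[
h_{u_i}=(x_i,1,0,\dots,0), \qquad x_i:=W_{e_1u_i}\ge 0.
\]
For such vectors, $\langle h_{u_j},h_{u_k}\rangle=x_jx_k+1$. This gives exact analogues of Lemmas~\ref{lem:variable-encoding}, \ref{lem:coordinate-extraction} and \ref{lem:multiplication-identity}, and the argument of Lemma~\ref{lem:completeness} goes through verbatim.

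\textbf{Soundness.} Given a satisfying assignment, set $h_{e_a}=e_a^{(r)}$ and $h_{u_i}=(x_i,1,0,\dots,0)$. All vectors are nonnegative, and each specified entry and side constraint is checked exactly as in Lemma~\ref{lem:soundness}.

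\textbf{Main obstacle.} The step that needs care is confining the variable rows to the plane. This depends on two facts: Lemma~\ref{lem:anchor-frame-rigidity} gives a full orthonormal coordinate frame, and each coordinate of $h_{u_i}$ is then read off exactly by a Gram entry against an anchor. Without the zero constraints $\widehat{W}_{e_au_i}=0$ for $a\ge 3$, the multiplication identity would gain extra terms $\sum_{a\ge 3}h_{u_j,a}h_{u_k,a}\ge 0$ and completeness would fail. With them, the reduction is correct, and $\exists\mathbb{R}$-hardness follows from the hardness of \textsc{ETR-AMI}, as in Theorem~\ref{thm:hardness}.
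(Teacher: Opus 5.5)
Your proposal is correct and follows the paper's proof essentially line for line. It uses the same $r$-anchor orthonormal frame via Lemma~\ref{lem:anchor-frame-rigidity}, and the same specified entries $\widehat{W}_{e_2u_i}=1$ and $\widehat{W}_{e_au_i}=0$ for $a\ge 3$ to force $h_{u_i}=(x_i,1,0,\dots,0)$. The arithmetic side constraints are likewise the same, with $e_1$ in place of $e$. Your remark is also well taken: the identity-map argument of Theorem~\ref{thm:classification-fixed-r} only transfers yes-instances, which is exactly why this direct reduction with the extra zero constraints is needed.
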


\begin{proof}
Fix \(r\ge 2\). We reduce from \textsc{ETR-AMI}
(Definition~\ref{def:etrami}).

Let
\[
\mathcal{I}
\]
be an instance of \textsc{ETR-AMI} with variables
\[
x_1,\dots,x_n\in\mathbb{R}_{\ge 0}
\]
and constraints of the forms
\[
x_i=1,\qquad x_i=x_j+x_k,\qquad x_i=x_jx_k.
\]

We construct an instance of constrained rank-$r$ nonnegative Gram feasibility
as follows.

\paragraph{Anchor rows.}
Introduce \(r\) distinguished indices
\[
e_1,\dots,e_r
\]
and specify the Gram entries
\[
\widehat{W}_{e_a e_b}=\delta_{ab}
\qquad \text{for all } a,b\in\{1,\dots,r\}.
\]
By Lemma~\ref{lem:anchor-frame-rigidity}, every feasible realization may be
normalized so that
\[
h_{e_a}=e_a^{(r)}
\qquad \text{for all } a=1,\dots,r.
\]

\paragraph{Variable rows.}
For each arithmetic variable \(x_i\), introduce one index \(u_i\).
Impose the specified Gram entry
\[
\widehat{W}_{e_2 u_i}=1,
\]
and, for every \(t=3,\dots,r\), impose the specified Gram entry
\[
\widehat{W}_{e_t u_i}=0.
\]

We claim that in every feasible realization,
\[
h_{u_i}=(x_i,1,0,\dots,0)
\]
for a uniquely determined scalar \(x_i\in\mathbb{R}_{\ge 0}\).
Indeed, write
\[
h_{u_i}=(a_{i,1},a_{i,2},\dots,a_{i,r})\in\mathbb{R}_+^r.
\]
Since \(h_{e_2}=e_2^{(r)}\), the constraint \(\widehat{W}_{e_2 u_i}=1\) gives
\[
a_{i,2}=1.
\]
Likewise, for each \(t=3,\dots,r\), the constraint
\(\widehat{W}_{e_t u_i}=0\) gives
\[
a_{i,t}=0.
\]
Setting \(x_i:=a_{i,1}\), we obtain
\[
h_{u_i}=(x_i,1,0,\dots,0),
\]
as claimed.

\paragraph{Constant constraints.}
For every source constraint \(x_i=1\), impose the affine side constraint
\[
W_{e_1 u_i}=1.
\]

\paragraph{Addition constraints.}
For every source constraint \(x_i=x_j+x_k\), impose the affine side constraint
\[
W_{e_1 u_j}+W_{e_1 u_k}-W_{e_1 u_i}=0.
\]

\paragraph{Multiplication constraints.}
For every source constraint \(x_i=x_jx_k\), impose the affine side constraint
\[
W_{u_j u_k}-W_{e_1 u_i}-1=0.
\]

We now verify correctness.

\paragraph{Soundness.}
Suppose \(\mathcal{I}\) has a satisfying assignment
\[
x_1,\dots,x_n\in\mathbb{R}_{\ge 0}.
\]
Define
\[
h_{e_a}=e_a^{(r)}
\qquad \text{for } a=1,\dots,r,
\]
and
\[
h_{u_i}=(x_i,1,0,\dots,0)
\qquad \text{for } i=1,\dots,n.
\]
These vectors lie in \(\mathbb{R}_+^r\). The anchor constraints hold by
construction. The variable-row constraints hold because
\[
\langle e_2^{(r)},(x_i,1,0,\dots,0)\rangle = 1
\]
and
\[
\langle e_t^{(r)},(x_i,1,0,\dots,0)\rangle = 0
\qquad \text{for } t=3,\dots,r.
\]
Moreover,
\[
W_{e_1 u_i}=x_i,
\]
and
\[
W_{u_j u_k}=x_jx_k+1.
\]
Hence the constant, addition, and multiplication constraints are satisfied
exactly as in the rank-$2$ construction.

\paragraph{Completeness.}
Conversely, suppose the constructed instance has a feasible realization.
By Lemma~\ref{lem:anchor-frame-rigidity}, we may normalize so that
\[
h_{e_a}=e_a^{(r)}
\qquad \text{for all } a=1,\dots,r.
\]
The variable-row constraints then force
\[
h_{u_i}=(x_i,1,0,\dots,0)
\]
for some uniquely determined \(x_i\in\mathbb{R}_{\ge 0}\), as shown above.
Consequently,
\[
W_{e_1 u_i}=x_i,
\qquad
W_{u_j u_k}=x_jx_k+1.
\]
The affine side constraints therefore enforce exactly the source arithmetic
constraints \(x_i=1\), \(x_i=x_j+x_k\), and \(x_i=x_jx_k\). Thus the extracted
tuple \((x_1,\dots,x_n)\) is a satisfying assignment of \(\mathcal{I}\).

Therefore the reduction is correct. Since \(r\) is fixed, the construction
uses \(n+r\) indices and \(O(n+m+r^2)\) specified entries and affine
constraints, where \(m\) is the number of source constraints. Hence the
construction is polynomial-time. It follows that constrained rank-$r$
nonnegative Gram feasibility is \(\exists\mathbb{R}\)-hard.
\end{proof}

\begin{theorem}
\label{thm:classification-fixed}
For every fixed integer \(r\ge 2\), constrained rank-$r$ nonnegative Gram
feasibility is \(\exists\mathbb{R}\)-complete.
\end{theorem}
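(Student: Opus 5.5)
The statement is the conjunction of membership and hardness for a fixed rank $r\ge 2$, and both halves are already available, so the plan is to combine them rather than build anything new. Fix an integer $r\ge 2$.

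\emph{Membership.} I would invoke Proposition~\ref{prop:er-membership}, which holds for every rank parameter $r$. It writes the specified entries, the affine side constraints, and the nonnegativity conditions as a conjunction of polynomial equations and inequalities in the $nr$ entries of $H$. Since $r$ is a fixed constant, that formula has size polynomial in the instance size. Hence the rank-$r$ problem lies in $\exists\mathbb{R}$.

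\emph{Hardness.} I would cite Theorem~\ref{thm:hardness-fixed-r}. That theorem gives a direct polynomial-time reduction from \textsc{ETR-AMI}. It uses the anchor frame $e_1,\dots,e_r$, which is rigid by Lemma~\ref{lem:anchor-frame-rigidity}. It also uses variable rows forced to the form $(x_i,1,0,\dots,0)$, and the affine encodings of constants, addition and multiplication carried over from Proposition~\ref{prop:arithmetic-realization}.

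I would deliberately base hardness on Theorem~\ref{thm:hardness-fixed-r} and not on the identity-map argument of Theorem~\ref{thm:classification-fixed-r}. Lemma~\ref{lem:rank-lift} only shows that yes-instances of the rank-$2$ problem remain yes-instances in rank $r$. The converse is not established: a rank-$2$ instance might become feasible in rank $r$ through vectors leaving the coordinate plane, for example anchors $e,f$ sitting on coordinates other than the first two. So the identity map is not, on its own, a valid reduction.

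The only step that needs real care is therefore choosing the correct hardness source. The extra anchors $e_3,\dots,e_r$ together with the constraints $\widehat W_{e_t u_i}=0$ are exactly what excludes spurious higher-dimensional realizations. Given those two ingredients, membership and hardness combine directly to give $\exists\mathbb{R}$-completeness for every fixed $r\ge 2$.
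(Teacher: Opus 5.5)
Your proposal is correct and matches the paper's proof: membership comes from Proposition~\ref{prop:er-membership}, and hardness comes from the anchor-frame reduction of Theorem~\ref{thm:hardness-fixed-r}. You are also right not to use the identity-map argument of Theorem~\ref{thm:classification-fixed-r}, since Lemma~\ref{lem:rank-lift} only carries yes-instances upward; for example, the rank-$2$ encoding of $x_1=1$, $x_2=x_1+x_1$, $x_2=x_1x_1$ is infeasible in rank $2$ but feasible in rank $3$ via $h_e=(1,0,0)$, $h_f=(0,1,0)$, $h_{u_1}=(1,1,1)$, $h_{u_2}=(2,1,0)$.
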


\begin{proof}
Membership in \(\exists\mathbb{R}\) follows from
Proposition~\ref{prop:er-membership}. Hardness follows from
Theorem~\ref{thm:hardness-fixed-r}.
\end{proof}
\section{Open Problems}
\label{sec:open}

The preceding sections establish that constrained nonnegative Gram
feasibility is $\exists\mathbb{R}$-complete for every fixed rank
$r \ge 2$. This raises several natural questions about related
factorization problems.

\subsection{Unconstrained symmetric nonnegative factorization}

The hardness result of Section~\ref{sec:reduction} relies on affine
side constraints on selected entries of the Gram matrix. A fundamental
question is whether similar hardness persists when these constraints
are removed.

\begin{definition}
Given a symmetric rational matrix $W\in\mathbb{Q}^{N\times N}$ and an integer
$r$, the \emph{unconstrained symmetric nonnegative factorization feasibility
problem} asks whether there exists
\[
H\in\mathbb{R}_+^{N\times r}
\]
such that
\[
W = HH^\top .
\]
\end{definition}

Determining the complexity of this problem remains open.

\subsection{Approximate factorization}

In many applications, symmetric nonnegative factorization is posed as
an optimization problem rather than an exact feasibility question.
A natural decision formulation asks whether an approximate factorization
exists within a given tolerance.

\begin{definition}
Given a symmetric rational matrix $W\in\mathbb{Q}^{N\times N}$, an integer $r$,
and a rational threshold $\tau\in\mathbb{Q}_{\ge 0}$, determine whether there exists
$H \in \mathbb{R}_+^{N\times r}$ such that $\|W - HH^\top\|_F^2 \le \tau$
\end{definition}

Understanding the complexity of this approximate feasibility problem
would clarify whether the hardness phenomena identified in this paper
persist under perturbations of the exact equality constraints.

\subsection{Restricted constraint families}

The reduction of Section~\ref{sec:reduction} uses affine relations
among selected Gram entries in order to encode arithmetic operations.
This raises the question of whether the problem becomes easier when
the constraint structure is restricted.

\begin{problem}
Identify classes of constrained Gram feasibility instances that are
solvable in polynomial time. Examples include cases in which
\begin{itemize}[leftmargin=2em]
\item only diagonal entries of the Gram matrix are specified,
\item only a bounded number of Gram entries are constrained, or
\item affine constraints are replaced by inequalities.
\end{itemize}
\end{problem}

\subsection{Connections with other matrix factorization problems}

Matrix factorization problems arise in many areas of optimization and
computational geometry. Classical examples include nonnegative matrix
factorization and completely positive matrix factorization.

\begin{problem}
Clarify the precise relationships between constrained nonnegative
Gram feasibility and other factorization problems, such as
nonnegative rank and completely positive factorizations.
\end{problem}

\section{Conclusion}
\label{sec:conclusion}

We proved that constrained nonnegative Gram feasibility is
$\exists\mathbb{R}$-complete already in rank~$2$, and more generally for every
fixed rank $r\ge 2$. Thus constrained symmetric nonnegative Gram factorization
belongs to the class of semialgebraic feasibility problems whose exact solution
is complete for the existential theory of the reals
\cite{schaefer2010complexity,cardinal2013survey}.

The main technical contribution is a rank-$2$ geometric encoding of arithmetic
within the nonnegative orthant. Once suitable anchor rows are fixed, variables
can be represented on an affine slice of $\mathbb{R}_+^2$, and their algebraic
relations can be enforced through affine constraints on Gram entries. This
shows that low-rank nonnegative Gram geometry already has the expressive power
required to simulate existential-real arithmetic.

At the same time, the scope of the result is precise. The hardness proof relies
essentially on affine side constraints among selected Gram entries, and
therefore does not resolve the complexity of the unconstrained symmetric
nonnegative factorization feasibility problem
\[
W = HH^\top, \qquad H \in \mathbb{R}_+^{N\times r}.
\]
Determining whether the unconstrained problem is also
$\exists\mathbb{R}$-hard remains the most immediate open question. Other
natural directions include approximate feasibility, restricted constraint
families, and the relation between constrained nonnegative Gram feasibility and
other low-rank factorization problems.

We hope that the perspective developed here helps clarify the boundary between
tractable and existential-real matrix factorization problems.

\printbibliography

@book{berman2003cp,
  title     = {Completely Positive Matrices},
  author    = {Berman, Abraham and Shaked-Monderer, Naomi},
  year      = {2003},
  publisher = {World Scientific},
  address   = {Singapore},
  doi       = {10.1142/5270}
}

@article{bomze2018copositive,
  title     = {Copositive optimization -- recent developments and applications},
  author    = {Bomze, Immanuel M.},
  journal   = {European Journal of Operational Research},
  volume    = {216},
  number    = {3},
  pages     = {509--520},
  year      = {2012},
  publisher = {Elsevier},
  doi       = {10.1016/j.ejor.2011.04.026}
}

@article{cohen1993nonnegativerank,
  title   = {Nonnegative ranks, decompositions, and factorizations of nonnegative matrices},
  author  = {Cohen, Joel E. and Rothblum, Uriel G.},
  journal = {Linear Algebra and its Applications},
  volume  = {190},
  pages   = {149--168},
  year    = {1993},
  doi     = {10.1016/0024-3795(93)90226-J}
}

@article{vavasis2009complexity,
  title   = {On the Complexity of Nonnegative Matrix Factorization},
  author  = {Vavasis, Stephen A.},
  journal = {SIAM Journal on Optimization},
  volume  = {20},
  number  = {3},
  pages   = {1364--1377},
  year    = {2010},
  doi     = {10.1137/070709967}
}

@article{lee1999nmf,
  title   = {Learning the parts of objects by non-negative matrix factorization},
  author  = {Lee, Daniel D. and Seung, H. Sebastian},
  journal = {Nature},
  volume  = {401},
  number  = {6755},
  pages   = {788--791},
  year    = {1999},
  doi     = {10.1038/44565}
}

@article{fawzi2015psdrank,
  title   = {Positive semidefinite rank},
  author  = {Fawzi, Hamza and Gouveia, Jo\~{a}o and Parrilo, Pablo A. and Robinson, Richard and Thomas, Rekha R.},
  journal = {Mathematical Programming},
  volume  = {153},
  number  = {1},
  pages   = {133--177},
  year    = {2015},
  doi     = {10.1007/s10107-014-0818-z}
}

@inproceedings{schaefer2010complexity,
  title     = {Complexity of Some Geometric and Topological Problems},
  author    = {Schaefer, Marcus},
  booktitle = {Graph Drawing: 17th International Symposium, GD 2009, Chicago, IL, USA, September 22-25, 2009, Revised Papers},
  series    = {Lecture Notes in Computer Science},
  volume    = {5849},
  pages     = {334--344},
  year      = {2010},
  publisher = {Springer Berlin Heidelberg},
  doi       = {10.1007/978-3-642-11805-0_32}
}

@article{cardinal2013survey,
  title         = {The Existential Theory of the Reals as a Complexity Class: {A} Compendium},
  author        = {Schaefer, Marcus and Cardinal, Jean and Miltzow, Tillmann},
  journal       = {arXiv preprint arXiv:2407.18006},
  year          = {2024},
  eprint        = {2407.18006},
  archivePrefix = {arXiv},
  primaryClass  = {cs.CC},
  doi           = {10.48550/arXiv.2407.18006}
}

@incollection{mnev1988universality,
  title     = {The universality theorems on the classification problem of configuration varieties and convex polytopes varieties},
  author    = {Mn{\"e}v, Nikolai E.},
  booktitle = {Topology and Geometry: Rohlin Seminar},
  series    = {Lecture Notes in Mathematics},
  volume    = {1346},
  pages     = {527--544},
  year      = {1988},
  publisher = {Springer-Verlag},
  address   = {Berlin, Heidelberg},
  doi       = {10.1007/BFb0082792}
}

@book{richter1996oriented,
  author    = {Richter-Gebert, J{\"u}rgen},
  title     = {Realization Spaces of Polytopes},
  series    = {Lecture Notes in Mathematics},
  volume    = {1643},
  year      = {2006},
  publisher = {Springer Berlin Heidelberg},
  address   = {Berlin, Heidelberg},
  doi       = {10.1007/BFb0093761}
}

@inproceedings{arora2012nmf,
  title     = {Computing a Nonnegative Matrix Factorization -- Provably},
  author    = {Arora, Sanjeev and Ge, Rong and Kannan, Ravindran and Moitra, Ankur},
  booktitle = {Proceedings of the Forty-fourth Annual ACM Symposium on Theory of Computing (STOC 2012)},
  pages     = {145--162},
  year      = {2012},
  publisher = {Association for Computing Machinery},
  address   = {New York, NY, USA},
  doi       = {10.1145/2213977.2214002}
}

@article{laurent2015cp,
  title     = {On the completely positive and positive-semidefinite-preserving cones},
  author    = {Barker, G. P. and Hill, Richard D. and Haertel, Raymond D.},
  journal   = {Linear Algebra and its Applications},
  volume    = {56},
  pages     = {221--229},
  year      = {1984},
  publisher = {Elsevier},
  doi       = {10.1016/0024-3795(84)90124-7}
}

@article{Schaefer2017Fixed,
  author  = {Schaefer, Marcus and {\v{S}}tefankovi{\v{c}}, Daniel},
  title   = {Fixed Points, Nash Equilibria, and the Existential Theory of the Reals},
  journal = {Theory of Computing Systems},
  volume  = {60},
  number  = {2},
  pages   = {172--193},
  year    = {2017},
  doi     = {10.1007/s00224-015-9662-0}
}

@article{Schaefer2018Tensor,
  author  = {Schaefer, Marcus and {\v{S}}tefankovi{\v{c}}, Daniel},
  title   = {The Complexity of Tensor Rank},
  journal = {Theory of Computing Systems},
  volume  = {62},
  number  = {5},
  pages   = {1161--1174},
  year    = {2018},
  doi     = {10.1007/s00224-017-9800-y}
}

@article{Schaefer2024Beyond,
  author  = {Schaefer, Marcus},
  title   = {Beyond the Existential Theory of the Reals},
  journal = {Theory of Computing Systems},
  volume  = {68},
  number  = {2},
  pages   = {195--226},
  year    = {2024},
  doi     = {10.1007/s00224-023-10151-x}
}

@article{Foerster2026Thickness,
  author  = {F{\"o}rster, Helena and Miltzow, Tillmann and Schnider, Patrick},
  title   = {Geometric Thickness of Multigraphs is \(\exists \mathbb{R}\)-Complete},
  journal = {Algorithmica},
  volume  = {88},
  number  = {3},
  pages   = {1--38},
  year    = {2026},
  doi     = {10.1007/s00453-025-01351-7}
}

@InProceedings{Abrahamsen2025Embeddability,
  author    = {Abrahamsen, Mikkel and Kleist, Linda and Miltzow, Tillmann},
  title     = {Geometric Embeddability of Complexes Is $\exists\mathbb{R}$-Complete},
  booktitle = {39th International Symposium on Computational Geometry (SoCG 2023)},
  pages     = {1:1--1:19},
  series    = {Leibniz International Proceedings in Informatics (LIPIcs)},
  ISBN      = {978-3-95977-281-5},
  ISSN      = {1868-8969},
  year      = {2023},
  volume    = {258},
  editor    = {Barequet, Gill and Tóth, Csaba D.},
  publisher = {Schloss Dagstuhl -- Leibniz-Zentrum f{\"u}r Informatik},
  address   = {Dagstuhl, Germany},
  URL       = {https://drops.dagstuhl.de/entities/document/10.4230/LIPIcs.SoCG.2023.1},
  DOI       = {10.4230/LIPIcs.SoCG.2023.1}
}

@article{Erickson2025Canonical,
  title     = {Framework for $\exists\mathbb{R}$-Completeness of Two-Dimensional Packing Problems},
  author    = {Abrahamsen, Mikkel and Miltzow, Tillmann and Seiferth, Nadja},
  journal   = {TheoretiCS},
  volume    = {3},
  pages     = {12:1--12:53},
  year      = {2024},
  publisher = {TheoretiCS Foundation},
  doi       = {10.46298/theoretics.2024.11145},
  eprint    = {2004.07090},
  archivePrefix = {arXiv}
}

\end{document}